\theoremstyle{plain}
\newtheorem{theorem}{Theorem}[section]
\newtheorem{lemma}[theorem]{Lemma}
\newtheorem{corollary}[theorem]{Corollary}
\newtheorem{proposition}[theorem]{Proposition}
\theoremstyle{definition}
\newtheorem{definition}[theorem]{Definition}
\newtheorem{remark}[theorem]{Remark}
\newtheorem{example}[theorem]{Example}
\DeclareMathOperator{\Out}{Out}
\DeclareMathOperator{\Inn}{Inn}
\DeclareMathOperator{\Aut}{Aut}
\DeclareMathOperator{\Dc}{Dc}
\DeclareMathOperator{\id}{id}
\newcommand{\bbz}{\mathbb{Z}}
\newcommand{\st}{\ |\ }
\title{On involutions and generalized symmetric spaces of dicyclic groups}
\author{Abigail Bishop, Christopher Cyr,\\ John Hutchens (\texttt{jdhutchens@saumag.edu}),\\ Clover May,\\ Nathaniel Schwartz (\texttt{nschwartz2@washcoll.edu})\\ and Bethany Turner}
\begin{document}
\maketitle

\begin{abstract}
Let $G=\Dc_{n}$ be the dicyclic group of order $4n$. Let $\varphi$ be an automorphism of $G$ of order $k$. We describe $\varphi$ and the generalized symmetric space $Q$ of $G$ associated with $\varphi$. When $\varphi$ is an involution, we describe its fixed point group $H=G^{\varphi}$ along with the $H$-orbits and $G$-orbits of $Q$ corresponding to  the action of $\varphi$-twisted conjugation.
\end{abstract}

\section*{Introduction}
Let $G$ be a group and $\varphi \in \Aut(G)$ such that $\varphi^k = \id$. The set $Q = \{x\varphi(x)^{-1} \st x \in G\}$ is known as the generalized symmetric space of $G$ corresponding to $\varphi$. If $\varphi$ is an involution and $G$ is a real reductive Lie group, then $Q$ is a reductive symmetric space. If $G$ is a reductive algebraic group defined over an algebraically closed field $F$, then $Q$ is also known as a symmetric variety; and if $G$ is defined over a non-algebraically closed field $F$, then $Q_F := \{x\varphi(x)^{-1} \st x \in G_F\}$ is called a symmetric $F$-variety, where $G_F$ denotes the $F$-rational points of $G$. Reductive symmetric spaces and symmetric $F$-varieties occur in geometry \cite{PdC83}, \cite{Abe88} and singularity theory \cite{LV83}; they are perhaps most well known for their role in the study of representation theory \cite{Vog83}. The generalized symmetric spaces defined above are also of importance in a number of areas, including group theory, number theory and representation theory \cite{BB81}.

To study the symmetric spaces, we begin by classifying automorphisms of $G$, particularly involutions, up to isomorphism. We also classify the $G$- and $H$-orbits of the action given by $\varphi$-twisted conjugation in $Q$, where $H = G^{\varphi} =  \{g \in G\st \varphi(g) = g\}$ is the fixed point group of $\varphi$.  For $g \in G$ and $q \in Q$, we define $\varphi$-twisted conjugation by  
$$g * q = gq\varphi(g)^{-1},\ g \in G, \ q \in Q,$$
and we denote the set of orbits in $Q$ corresponding to the $\varphi$-twisted action of $G$ or $H$ on $Q$ by $G\backslash Q$ or $H \backslash Q$. When $\varphi$ is an involution $Q \cong G/H$ via the map $g \mapsto g\varphi(g)^{-1}$, and also $H \backslash Q \cong H \backslash G/H$. These orbits and double cosets play an important role in representation theory \cite{He94}, \cite{He00}.

We begin by stating general definitions and facts about dicyclic groups. In Section 2 we describe the automorphism group of $G$ and the automorphisms of order $k$. We also determine the isomorphism classes of the involutions.  In Section 3 we describe $Q$ and $H$. The space $R = \{g \st \varphi(g) = g^{-1}\}$ is called the set of elements split by an automorphism $\varphi$.  This space arises in the study of related algebraic groups \cite{He00}, \cite{He88} and representation theory \cite{Vog83}. As was shown in \cite{Heletal12}, we prove that equivalent involutions do not necessarily have isomorphic fixed point groups, which is not the case when considering symmetric $F$-varieties of algebraic groups.

\section{Preliminaries}

A summary of the basic properties of dicyclic groups can be found in the literature; see for example \cite{HSMCox80}. Dicyclic groups can be described concisely in terms of their generators and relations. To construct the dicyclic group, fix an element $x$ of order $2n$ such that $\langle x \rangle = C_{2n}$, the cyclic group of order $2n$. Then adjoin an element $y$, subject to the conditions 
\[y^2 = x^n, \text{ and } y^{-1}xy = x^{-1}. \]

We shall use the following presentation for the dicyclic group throughout this paper:
$$G = \Dc_n = \langle x,y \ | \ x^{2n}=1, y^2=x^n, y^{-1} x y = x^{-1} \rangle, $$
when $n \ge 2$.  The group $\Dc_2$ is isomorphic to the quaternion group. The group $\Dc_1$ is isomorphic to $C_4$, which is not considered a dicyclic group.  In the literature, the dicyclic groups may be denoted by $\langle 2,2, n\rangle$ or $\langle n, 2, 2\rangle$, etc., as they belong to the family of binary polyhedral groups.

We begin by recording some basic facts about the structure and presentation of dicyclic groups.  

\begin{proposition}
\label{uniqpres}
Every element of $\Dc_n$ has a unique presentation as $y^a x^b$ where $a \in \{0,1\}$ and $0\leq b \leq 2n-1$.
\end{proposition}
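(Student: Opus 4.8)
The plan is to establish the two claims separately: existence of such a presentation, and its uniqueness.

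First I would prove existence. Every element of $\Dc_n$ is a word in $x$ and $y$. Using the relation $y^{-1}xy = x^{-1}$, equivalently $xy = yx^{-1}$, I can push every occurrence of $y$ to the left past any $x$, at the cost of inverting the power of $x$. So an arbitrary word can be rewritten as $y^m x^b$ for some integers $m, b$. Then I use $y^2 = x^n$ to reduce $m$ modulo $2$: if $m$ is even, $y^m = x^{nm/2}$, absorbing it into the power of $x$; if $m$ is odd, $y^m = y \cdot x^{n(m-1)/2}$. Finally I reduce the exponent of $x$ modulo $2n$ using $x^{2n} = 1$, landing in the range $0 \le b \le 2n-1$. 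This shows every element has the form $y^a x^b$ with $a \in \{0,1\}$ and $0 \le b \le 2n-1$.

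Next I would prove uniqueness. The counting approach is cleanest: there are exactly $2 \cdot 2n = 4n$ expressions of the form $y^a x^b$ with $a \in \{0,1\}$ and $0 \le b \le 2n-1$. Since $\Dc_n$ has order $4n$ and existence shows these expressions exhaust the group, they must all represent distinct elements, giving uniqueness. The gap here is that asserting $|\Dc_n| = 4n$ really requires knowing the presentation does not collapse the group — so I should instead argue uniqueness directly. Suppose $y^a x^b = y^{a'} x^{b'}$. If $a = a'$, then $x^b = x^{b'}$, and since $x$ has order $2n$ and $0 \le b, b' \le 2n-1$, we get $b = b'$. If $a \ne a'$, say $a = 1$, $a' = 0$, then $y x^b = x^{b'}$, so $y = x^{b'-b} \in \langle x \rangle$; but $\langle x \rangle \cong C_{2n}$ has a unique element of order dividing $2$ other than the identity (namely $x^n$), whereas $y$ has order $4$ (since $y^2 = x^n \ne 1$ as $n \ge 2$, and $y^4 = x^{2n} = 1$), so $y \notin \langle x \rangle$, a contradiction.

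I expect the main obstacle to be the logical subtlety in the uniqueness argument: one wants to avoid circularity with the order of $G$. The honest route is to exhibit a concrete model of $\Dc_n$ of order $4n$ — for instance as a subgroup of the unit quaternions generated by $\zeta = e^{\pi i/n}$ and $j$, or as a $2 \times 2$ complex matrix group — in which the $4n$ normal forms visibly take distinct values; then the universal property of the presentation gives a surjection onto this model, and since the normal forms already surject onto $\Dc_n$ abstractly and inject into the model, they are in bijection with $\Dc_n$. Alternatively, since this is a "basic facts" proposition, the paper may simply cite \cite{HSMCox80} for $|\Dc_n| = 4n$ and then run the short counting argument above. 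Either way the computational content — the rewriting rules $xy = yx^{-1}$ and $y^2 = x^n$ — is entirely routine.
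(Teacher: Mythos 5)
Your existence argument is exactly the paper's proof: push the $y$'s to the left using $xy=yx^{-1}$, then reduce with $y^2=x^n$ and $x^{2n}=1$. The paper in fact stops there --- it never addresses uniqueness explicitly, relying implicitly on the construction described beforehand (adjoining $y$ to a genuine cyclic group of order $2n$) and the literature for $|\Dc_n|=4n$. So your instinct that uniqueness is the delicate half, and that the honest route is a faithful concrete model (unit quaternions $\langle e^{\pi i/n}, j\rangle$ or $2\times 2$ matrices) in which the $4n$ normal forms are visibly distinct, is exactly right and goes beyond what the paper writes down.

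There is, however, a genuine flaw in the ``direct'' uniqueness argument you offer as a fallback. From $yx^b = x^{b'}$ you correctly reduce to showing $y\notin\langle x\rangle$, but your reason --- that $y$ has order $4$ --- does not suffice: when $n$ is even, $\langle x\rangle\cong C_{2n}$ contains elements of order $4$ (namely $x^{n/2}$ and $x^{3n/2}$, as the paper's \cref{orderfour} records), so order considerations alone cannot exclude $y$ from $\langle x\rangle$; the parenthetical about the unique element of order dividing $2$ does not rescue this. The clean direct argument is that $y\in\langle x\rangle$ would force $y$ to commute with $x$, whence $x^{-1}=y^{-1}xy=x$ and $x^2=1$, contradicting $x$ having order $2n\geq 4$. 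But note that even this, like the counting argument, presupposes that $x$ really has order $2n$ in the presented group, so you cannot actually escape the need for the concrete model (or a citation); your closing paragraph already concedes this, and that is the version you should commit to.
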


\begin{proof}
Applying the relation $xy = yx^{-1}$ to an arbitrary element of the form $x^l y^k$ yields
$$
x^l y^k =
\begin{cases}
	y^k x^l \text{ , } k \text{ even} \\
	y^k x^{-l} \text{ , } k \text{ odd}.
\end{cases}
$$
Thus, given any product of elements $y^{l_1}x^{k_1}y^{l_2}x^{k_2} \ldots y^{l_m}x^{k_m}$, we can rearrange the terms to
$y^{l_1+l_2+\cdots +l_m} x^{\pm k_1 \pm k_2 \pm \cdots \pm k_m}$, and the result follows by applying the relations $y^2 = x^n$ and $x^{2n} = 1$ and by reducing the resulting exponent of $x$ modulo $2n$.
\end{proof}

\begin{corollary}
The dicyclic group has order $4n$ and consists of the elements 
\[ \Dc_n = \{1, x, \dots, x^{2n-1}, y, yx, \dots, yx^{2n-1}\}. \]
\end{corollary}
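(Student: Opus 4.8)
The plan is to read the corollary off directly from Proposition~\ref{uniqpres}. First I would observe that the pairs $(a,b)$ with $a\in\{0,1\}$ and $0\le b\le 2n-1$ number exactly $2\cdot 2n=4n$. By the existence half of Proposition~\ref{uniqpres}, every element of $\Dc_n$ equals $y^a x^b$ for some such pair, so the set $\{1,x,\dots,x^{2n-1},y,yx,\dots,yx^{2n-1}\}$ — which is precisely the list of the $y^a x^b$ — exhausts $\Dc_n$; conversely each element on this list manifestly lies in $\Dc_n$. Hence the two sets coincide. By the uniqueness half of Proposition~\ref{uniqpres}, distinct pairs $(a,b)$ yield distinct group elements, so there is no further collapsing among the $4n$ listed elements and therefore $|\Dc_n|=4n$.

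The one point that deserves care is that a group given by generators and relations a priori comes only with an \emph{upper} bound on its order: the rewriting argument in the proof of Proposition~\ref{uniqpres}, by itself, shows only that $\Dc_n$ has at most $4n$ elements. What upgrades this to an equality is the uniqueness clause of that proposition, so in the write-up I would be explicit about invoking that clause, not merely normal-form existence. If one wanted the lower bound $|\Dc_n|\ge 4n$ independently — say as a sanity check — I would exhibit a concrete group of order $4n$ satisfying the defining relations, e.g.\ the subgroup of $GL_2(\mathbb{C})$ generated by $\left(\begin{smallmatrix}\zeta&0\\0&\zeta^{-1}\end{smallmatrix}\right)$ for $\zeta$ a primitive $2n$-th root of unity together with $\left(\begin{smallmatrix}0&1\\-1&0\end{smallmatrix}\right)$, check it has $4n$ elements and satisfies $x^{2n}=1$, $y^2=x^n$, $y^{-1}xy=x^{-1}$, and then invoke the universal property of the presentation to get a surjection $\Dc_n\twoheadrightarrow$ this group; against the upper bound $4n$ this forces equality. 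Given Proposition~\ref{uniqpres}, however, this detour is unnecessary.

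There is essentially no obstacle here: the substantive work — organizing an arbitrary word in $x,y$ into the normal form $y^a x^b$ — was already carried out in Proposition~\ref{uniqpres}, and the corollary is a bookkeeping consequence. The only thing to watch is the logical gap flagged above, namely not to conflate ``$\Dc_n$ is generated by $x,y$ subject to these relations'' with ``$\Dc_n$ automatically has order $4n$''; the bridge is the uniqueness of the normal form.
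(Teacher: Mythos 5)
Your proof is correct and takes the same route the paper intends: the corollary is left as an immediate consequence of Proposition~\ref{uniqpres}, obtained by counting the $2\cdot 2n=4n$ normal forms $y^a x^b$, with existence giving exhaustion and uniqueness ruling out collapsing. Your added caveat---that the rewriting argument alone only bounds the order from above and that the uniqueness clause (or a concrete faithful realization) is what secures $|\Dc_n|=4n$---is a fair and careful observation, but it does not change the substance of the argument.
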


The following lemma describes the inverses of arbitrary elements of $Dc_n$.

\begin{lemma}
\label{inverse}
$\left( x^b \right)^{-1} = x^{2n-b}$ and $\left( yx^b \right)^{-1} = yx^{b+n}$.
\end{lemma}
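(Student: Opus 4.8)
The plan is to verify each claimed identity by a short direct computation, using the commutation rule $x^l y^k = y^k x^{(-1)^k l}$ recorded in the proof of Proposition~\ref{uniqpres}, together with the defining relations $y^2 = x^n$ and $x^{2n} = 1$. Since $\Dc_n$ is a group, in each case it is enough to exhibit a one-sided inverse of the element in question.

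The first identity is immediate: the element $x$ has order $2n$, so $x^b x^{2n-b} = x^{2n} = 1$, whence $\left(x^b\right)^{-1} = x^{2n-b}$. For the second identity I would multiply out $\left(yx^b\right)\left(yx^{b+n}\right)$. Moving $x^b$ past $y$ via the $k=1$ case of the commutation rule, $x^b y = y x^{-b}$, gives
$$yx^b \cdot y x^{b+n} = y\left(x^b y\right)x^{b+n} = y\left(y x^{-b}\right)x^{b+n} = y^2 x^{-b+(b+n)} = y^2 x^n.$$
Applying $y^2 = x^n$ and then $x^{2n}=1$ yields $y^2 x^n = x^n x^n = x^{2n} = 1$, so $\left(yx^b\right)^{-1} = yx^{b+n}$.

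There is no real obstacle here; the computation is routine once the commutation rule from Proposition~\ref{uniqpres} is in hand. The only point worth a remark is that the exponent $b+n$ need not lie in the range $\{0,\dots,2n-1\}$, so to express $\left(yx^b\right)^{-1}$ in the canonical form of Proposition~\ref{uniqpres} one reduces $b+n$ modulo $2n$; this does not affect the statement as written.
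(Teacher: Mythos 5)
Your proof is correct and follows essentially the same route as the paper: both verify the identities by direct computation, moving $x^b$ past $y$ via $x^b y = yx^{-b}$ and then applying $y^2 = x^n$ and $x^{2n} = 1$. The only cosmetic difference is that you invoke the group axiom to justify checking a one-sided inverse, whereas the paper simply records both products.
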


\begin{proof}
	\[ x^b x^{2n-b} = x^{2n-b} x^b = x^{2n} =1 \]
	and
	\[ yx^b yx^{b+n} = yx^{b+n} yx^b = y^2 x^n = x^{2n} =1. \]
\end{proof}

\begin{theorem}
\label{normsubgrp}
The cyclic group $C_{2n} = \langle \ x \ | \ x^{2n}=1 \ \rangle$ is a normal subgroup of $\Dc_n$.
\end{theorem}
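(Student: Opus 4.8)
The plan is to give the one–line structural argument and then note the elementary direct verification as an alternative. By the corollary above, $|\Dc_n| = 4n$, and $C_{2n} = \langle x \rangle$ is plainly a subgroup of order $2n$ (it is cyclic, generated by $x$, which has order $2n$ by the presentation). Hence $[\Dc_n : C_{2n}] = 2$, and every subgroup of index $2$ is normal; so $C_{2n} \trianglelefteq \Dc_n$. This is the cleanest route and I would make it the main body of the proof.

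For completeness I would also record the hands-on check, since it is what gets reused later when analyzing automorphisms. Because $\Dc_n$ is generated by $x$ and $y$, to prove $C_{2n}$ normal it suffices to show $g C_{2n} g^{-1} \subseteq C_{2n}$ for $g \in \{x, y\}$: the reverse inclusion is then automatic, as $C_{2n}$ is finite and conjugation is injective. Conjugation by $x$ is trivial on $\langle x \rangle$. For conjugation by $y$, it is enough to check it on the generator $x$ of $C_{2n}$: the defining relation $y^{-1} x y = x^{-1}$ gives $y^{-1} x y = x^{-1} \in C_{2n}$ (and, using \Cref{inverse}, $y^{-1} x^b y = x^{-b} = x^{2n-b}$ for all $b$). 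Thus $y^{-1} C_{2n} y \subseteq C_{2n}$, completing the verification.

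There is essentially no obstacle here; the only point requiring a moment's care is justifying that checking the containment on generators of both the subgroup and the ambient group suffices, which follows from the finiteness of $C_{2n}$ together with the fact that $x \mapsto g^{-1} x g$ extends to a homomorphism on $\langle x \rangle$. I would keep the write-up to the index-$2$ argument as the proof proper, perhaps with a parenthetical remark pointing out the explicit formula $y^{-1} x^b y = x^{2n-b}$ for later use.
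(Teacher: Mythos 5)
Your proposal is correct, but your main argument is genuinely different from the paper's. The paper proves normality by direct computation: it conjugates an arbitrary $x^k$ by an arbitrary element $yx^b$ and finds $yx^b\, x^k\, (yx^b)^{-1} = x^{-k} \in C_{2n}$ (closure under conjugation by powers of $x$ being trivial). You instead lead with the index-$2$ argument, which is shorter and requires only the order count $|\Dc_n| = 4n$ from the preceding corollary together with the fact that $x$ has order $2n$; every index-$2$ subgroup is normal, and you are done. What the paper's computation buys, and what your supplementary ``hands-on check'' also recovers, is the explicit conjugation formula on $C_{2n}$, which is tacitly reused later (for instance in the proofs of \cref{centergrp} and \cref{innfixC}); your version derives it from the single relation $y^{-1}xy = x^{-1}$ plus the standard generation argument, with the finiteness point correctly flagged to get equality from containment. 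Both routes are sound; yours is the more economical proof of the bare statement, while the paper's is the more computationally informative one.
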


\begin{proof}
This set is clearly closed under multiplication and conjugation by elements of the subgroup. The set is also closed under conjugation by elements of the form $yx^b$, since 
$$yx^b x^k yx^{b+n} = y^2 x^{-b-k+b+n} = x^{2n-k} = x^{-k} \in C_{2n}.$$
\end{proof}

\begin{theorem}
\label{centergrp}
$Z(G) = \{ 1, x^n \}$.
\end{theorem}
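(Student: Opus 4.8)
The plan is to prove the two inclusions separately, using the unique normal form from Proposition~\ref{uniqpres}. For $\{1, x^n\} \subseteq Z(G)$, I would simply check that $x^n$ commutes with both generators. It commutes with $x$ since both are powers of $x$, and conjugating by $y$ gives $y^{-1} x^n y = (y^{-1} x y)^n = (x^{-1})^n = x^{-n}$, which equals $x^n$ because $x^{2n} = 1$. Since $x$ and $y$ generate $G$, this shows $x^n \in Z(G)$, and $1 \in Z(G)$ is trivial.

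For the reverse inclusion, I would take an arbitrary central element, write it in the normal form $y^a x^b$ with $a \in \{0,1\}$ and $0 \le b \le 2n-1$ guaranteed by Proposition~\ref{uniqpres}, and split into two cases. If $a = 0$, then $x^b$ automatically commutes with $x$, so centrality is equivalent to commuting with $y$; since conjugation by $y$ sends $x^b$ to $x^{-b}$ (as in the proof of Theorem~\ref{normsubgrp}), we need $x^{-b} = x^b$, i.e.\ $x^{2b} = 1$, i.e.\ $2n \mid 2b$, i.e.\ $n \mid b$, which forces $b \in \{0, n\}$. If $a = 1$, then $y x^b$ must commute with $x$; but $(yx^b)x = y x^{b+1}$ while $x(yx^b) = (xy)x^b = y x^{-1} x^b = y x^{b-1}$, so centrality forces $x^{b+1} = x^{b-1}$, i.e.\ $x^2 = 1$, i.e.\ $2n \mid 2$, which is impossible since $n \ge 2$. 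Hence no element with $a = 1$ is central, and the only central elements are $1$ and $x^n$.

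I do not expect a genuine obstacle here; the only thing to be careful about is the bookkeeping with the relation $xy = yx^{-1}$ (equivalently $x^{-1} y = y x$) when moving a power of $x$ past $y$, and remembering to reduce exponents of $x$ modulo $2n$ throughout. An alternative route would exploit that $C_{2n}$ is normal of index $2$ (Theorem~\ref{normsubgrp}): any element outside $C_{2n}$ has the form $yx^b$, and conjugation by it inverts $x$, so it cannot centralize $x$ unless $x = x^{-1}$, which fails for $n \ge 2$; inside $C_{2n}$ the computation in the $a=0$ case pins down $\{1, x^n\}$. The direct case analysis above is, however, the cleanest and self-contained option.
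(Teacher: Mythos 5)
Your proof is correct and follows essentially the same route as the paper's: both reduce to checking which elements of $C_{2n}$ commute with the $yx^j$'s (getting $b\in\{0,n\}$ from $x^b=x^{-b}$) and ruling out elements of the form $yx^b$ because conjugation by them inverts $x$. Your version is slightly more explicit in verifying the inclusion $\{1,x^n\}\subseteq Z(G)$ and in invoking the normal form of Proposition~\ref{uniqpres}, but the substance is identical.
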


\begin{proof}
The normal subgroup $C_{2n}$ is abelian, so it remains to check that $1$ and $x^n$ are the only elements that commute with elements of the form $yx^j$. We have
$$x^b y x^j  = y x^{j-b} = yx^j x^{-b},$$
and $x^b= x^{-b}$ if and only if $b=n$ or $b=0$. Since,
$$yx^k x^i = x^{-i} yx^k,$$
and $yx^k$ does not commute with all elements of $C_{2n}$.
\end{proof}

\begin{lemma}
\label{orderfour}
Elements of the form $y x^b$ have order $4$ in $G$. If $n$ is odd, these are the only order $4$ elements. If $n$ is even, $x^{\frac{n}{2}}$ and $x^{\frac{3n}{2}}$ are the only additional elements of order $4$.
\end{lemma}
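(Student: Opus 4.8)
The plan is to split the elements of $\Dc_n$ into the two natural families identified in the Corollary above: the powers $x^b$ lying in $C_{2n}$, and the elements of the form $yx^b$. For the first family I would argue via the order formula in a cyclic group; for the second, via a one-line direct computation.

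First I would treat the elements $yx^b$. Using the rearrangement rule $x^b y = yx^{-b}$ established in the proof of \Cref{uniqpres}, a direct computation gives $(yx^b)^2 = yx^b\cdot yx^b = y\cdot yx^{-b}\cdot x^b = y^2 = x^n$. Since $x$ has order $2n$, the element $x^n$ is nontrivial and satisfies $(x^n)^2 = x^{2n} = 1$, so it has order $2$; hence $(yx^b)^4 = (x^n)^2 = 1$ while $(yx^b)^2 = x^n \neq 1$, and therefore every element $yx^b$ has order exactly $4$. This disposes of one direction and also shows there are at least $2n$ elements of order $4$ regardless of the parity of $n$.

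Next I would determine which powers $x^b$ have order $4$. In the cyclic group $C_{2n} = \langle x\rangle$ the order of $x^b$ is $2n/\gcd(b,2n)$, so $x^b$ has order $4$ precisely when $\gcd(b,2n) = n/2$. This already forces $n$ to be even. Conversely, when $n$ is even one solves the condition by writing $b = (n/2)m$ and observing that $\gcd(b,2n) = n/2$ is equivalent to $\gcd(m,4) = 1$; since $0 \le b \le 2n-1$ restricts $m$ to $\{0,1,2,3\}$, the only solutions are $m = 1$ and $m = 3$, i.e. $b = n/2$ and $b = 3n/2$. Combining: if $n$ is odd no power of $x$ has order $4$, so the order-$4$ elements are exactly the $yx^b$; if $n$ is even we additionally obtain precisely $x^{n/2}$ and $x^{3n/2}$.

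I do not expect a serious obstacle here; the argument is essentially elementary. The only place that calls for a little care is the number-theoretic bookkeeping in the even case — in particular, checking that $n/2$ may itself be even (for instance $n = 4$) without altering the count — but this is transparent once the condition is recast as $\gcd(m,4) = 1$.
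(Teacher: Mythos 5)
Your proposal is correct and follows essentially the same route as the paper: split the elements into the two families $x^b$ and $yx^b$, compute directly that $(yx^b)^2 = x^n$ so these all have order exactly $4$, and then determine which powers of $x$ have order $4$ (you via the formula $\operatorname{ord}(x^b) = 2n/\gcd(b,2n)$, the paper by solving $4b \equiv 0 \pmod{2n}$ and discarding $b = 0, n$). If anything, your treatment of the $yx^b$ case is slightly tighter, since deducing ``order exactly $4$'' from $(yx^b)^2 = x^n \neq 1$ is cleaner than separately asserting $(yx^b)^k \neq 1$ for $k \le 3$.
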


\begin{proof}
Let $yx^b \in G$. Then $$\left( yx^b \right)^4 = yx^b yx^b yx^b yx^b = y^2 x^{b-b}y^2 x^{b-b} = y^4 =1,$$ and $\left( yx^b \right)^k \neq 1$ for all $1 \leq k \leq 3$. This calculation does not depend on $n$, and $(yx^b)^2 = x^n$ for all possible values of $b$. Now letting $(x^b)^4=1$ and solving $4b \equiv 2n$ for $b$ yields $b=0, \frac{n}{2}, n$, and $\frac{3n}{2}$. For $b = 0$ or $n$, we get $x^b = 1$ or $x^b = x^n$. These elements have order 1 and 2, respectively.
\end{proof}

\section{Automorphisms of $G$}

The automorphism group of $G$ is denoted by $\Aut(G)$. Denote by $\Inn(g)$ the inner automorphism defined by conjugation by $g \in G$; that is $x \mapsto gxg^{-1}$. Also, let $\Inn(G)$ denote set of inner automorphisms of $G$. The following are some general observations about $\Aut(G)$. We also characterize the automorphisms of fixed order and describe equivalence classes of automorphisms. We assume, unless otherwise specified, that $n>2$ and equivalence is modulo $2n$, denoted by $\equiv$.

\begin{proposition}\label{autgg}
The set $\big(\Aut(G),G\big)$ is a group with respect to the operation
\[ (\phi,g)(\theta,h) = \big(\phi \circ \theta, g \phi(h) \big), \]
where $\phi,\theta \in \Aut(G)$ and $g,h \in G$.
\end{proposition}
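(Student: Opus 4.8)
The plan is to verify the group axioms directly, observing first that the claimed operation is nothing but the multiplication of the holomorph $G \rtimes \Aut(G)$, written in the order $(\text{automorphism},\ \text{group element})$. Closure is immediate: for $\phi,\theta\in\Aut(G)$ and $g,h\in G$ we have $\phi\circ\theta\in\Aut(G)$ and $g\phi(h)\in G$, so $(\phi,g)(\theta,h)=(\phi\circ\theta,\ g\phi(h))$ again lies in $\Aut(G)\times G$.

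For associativity I would expand both $\big((\phi,g)(\theta,h)\big)(\psi,k)$ and $(\phi,g)\big((\theta,h)(\psi,k)\big)$ and compare. The first equals $(\phi\circ\theta\circ\psi,\ g\,\phi(h)\,(\phi\circ\theta)(k))$ and the second equals $(\phi\circ\theta\circ\psi,\ g\,\phi(h\,\theta(k)))$; since $\phi$ is a homomorphism, $\phi(h\,\theta(k))=\phi(h)\,\phi(\theta(k))=\phi(h)\,(\phi\circ\theta)(k)$, so the two agree. The single fact doing the work here is precisely that the first coordinate acts as a homomorphism on the second.

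For the identity, $(\id,1)$ serves on both sides: $(\id,1)(\theta,h)=(\theta,\ 1\cdot\id(h))=(\theta,h)$, while $(\phi,g)(\id,1)=(\phi\circ\id,\ g\,\phi(1))=(\phi,g)$ using $\phi(1)=1$. For inverses, solving $(\phi,g)(\psi,k)=(\id,1)$ forces $\psi=\phi^{-1}$ and $g\,\phi(k)=1$, i.e. $k=\phi^{-1}(g^{-1})$; I would then confirm that $(\phi^{-1},\ \phi^{-1}(g^{-1}))$ is a two-sided inverse of $(\phi,g)$ by substituting back.

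None of these steps poses a real obstacle; the only point requiring care is keeping the composition convention $(\phi\circ\theta)(k)=\phi(\theta(k))$ consistent throughout, and invoking the homomorphism property of $\phi$ at the right moment in the associativity computation — everything else is routine verification.
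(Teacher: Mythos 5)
Your proposal is correct and follows essentially the same route as the paper: direct verification of closure, the identity $(\id,1)$, the inverse $\big(\phi^{-1},\phi^{-1}(g^{-1})\big)$, and associativity via expanding both triple products and invoking the homomorphism property of $\phi$. The only addition is your (accurate) remark that this is the holomorph $G\rtimes\Aut(G)$, which the paper does not state explicitly.
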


\begin{proof}
It is clear that the set is closed under the operation.  The element of the form $(\id, 1)$ is the identity element, since
\[ (\id,1)(\phi,g) = \big(\id \phi, 1 \id(g)\big) = (\phi, g) = \big(\phi \circ \id, g \phi(1) \big)= (\phi, g)(\id, 1). \]
An element of the form $(\phi, g)$ satisfies
\[ \big(\phi,g)(\phi^{-1},\phi^{-1}(g^{-1})\big) = \Big(\id, g \phi \big(\phi(g^{-1})\big)\Big) = (\id, 1), \]
and so every element has an inverse. Notice that, for $\rho \in \Aut(G)$ and $\ell \in G$,
\begin{align*}
\big( (\phi,g)(\theta,h) \big) (\rho, \ell) &= \big(\phi \circ \theta, g \phi(h)\big)( \rho, \ell) \\
&= \big( \phi \circ \theta \circ \rho, g \phi(h)\phi \circ \theta(\ell) \big),
\end{align*}
and that
\begin{align*}
(\phi,g)\big( (\theta,h)(\rho,\ell) \big) &= (\phi,g)\big(\theta  \circ \rho, h \theta(\ell) \big) \\
&= \Big(\phi \circ  \theta \circ  \rho, g \phi \big( h \theta(\ell) \big) \Big) \\
&= \big(\phi \circ  \theta \circ  \rho, g \phi(h) \phi \circ  \theta(\ell) \big).
\end{align*}
This shows the operation is associative and completes the proof.
\end{proof}

\begin{proposition}
\label{autfixC}
If $\varphi \in \Aut(G)$, then $\varphi(x) = x^r$ for $r \in \mathcal{U}_{2n}$.
\end{proposition}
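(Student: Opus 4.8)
The plan is to read off the conclusion from the order statistics of $G$, using only that an automorphism preserves the order of every element. First I would note that $x$ has order $2n$, and that, under the standing hypothesis $n>2$, we have $2n>4$. By \Cref{orderfour}, every element of $G$ outside $\langle x\rangle$ — that is, every element of the form $yx^b$ — has order exactly $4$. Consequently $\varphi(x)$, which has the same order as $x$, namely $2n\neq 4$, cannot be of the form $yx^b$; hence $\varphi(x)\in\langle x\rangle=C_{2n}$.

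The second step is the elementary fact about cyclic groups: the elements of $C_{2n}$ of order $2n$ are precisely the powers $x^r$ with $\gcd(r,2n)=1$, i.e.\ with $r\in\mathcal{U}_{2n}$. Since $\varphi(x)$ lies in $C_{2n}$ and has order $2n$, we conclude $\varphi(x)=x^r$ for some $r\in\mathcal{U}_{2n}$, which is exactly the assertion. (Equivalently, one can phrase the first step as: for $n>2$ the subgroup $\langle x\rangle$ is the unique cyclic subgroup of order $2n$ in $G$, hence characteristic, so $\varphi$ restricts to an automorphism of $C_{2n}$, and the classification of automorphisms of a cyclic group finishes the proof.)

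I do not expect any genuine obstacle. The only point that needs care is excluding the possibility that $\varphi(x)$ is one of the order-$4$ elements lying outside $\langle x\rangle$, and this is precisely where the hypothesis $n>2$ enters: for $n=2$ the group is the quaternion group $Q_8$, in which $\langle x\rangle$ is not characteristic and the statement fails. When $n$ is even one should also remember, via \Cref{orderfour}, that $\langle x\rangle$ itself contains the extra order-$4$ elements $x^{n/2},x^{3n/2}$, but these are irrelevant to the argument since we are tracking the order-$2n$ element $x$, not order-$4$ elements.
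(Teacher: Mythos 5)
Your proof is correct and follows essentially the same route as the paper's: both arguments use that automorphisms preserve order, that every element outside $\langle x\rangle$ has order $4$ by \cref{orderfour}, and that $x$ has order $2n>4$ when $n>2$, so $\varphi(x)$ must be a generator of $C_{2n}$, i.e.\ $x^r$ with $r\in\mathcal{U}_{2n}$. If anything, your version is tighter, since the paper's proof also digresses into showing $\varphi(C_{2n})\subseteq C_{2n}$ for the order-$4$ elements $x^{n/2},x^{3n/2}$, which is not needed for the statement about $\varphi(x)$ itself.
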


\begin{proof}
	Since $n>2$, $x$ has order greater than $4$, and the only elements of order higher than $4$ are in $C_{2n}$.  So $\varphi(x)=x^j$ where $j$ and $2n$ are coprime.  If $\varphi(x^b) \not\in C_{2n}$, then $\varphi(x^b)= yx^k$, but $yx^k$ has order $4$, so $x^b$ must have order $4$.  The only elements of order $4$ in $C_{2n}$ are $x^{\frac{n}{2}}$ and its $x^{\frac{3n}{2}}$ when $n$ is even.  So we let $n$ be even; then
		\[ \varphi(x^{\frac{n}{2}})=\varphi(x)^{\frac{n}{2}} = x^{\frac{jn}{2}} \in C_{2n}. \]
\end{proof}

We can now classify the automorphisms of $G$ by specifying their actions on the generators $x$ and $y$.  Denote by $\mathcal{U}_{2n}$ the set of units in $\mathbb{Z}_{2n}$.

\begin{theorem}
\label{automorphisms}
A homomorphism $\varphi : G \to G$ is an automorphism if and only if $\varphi(x) = x^r$ and $\varphi(y)=yx^s$, for some $r\in \mathcal{U}_{2n}$ and some $s\in \mathbb{Z}_{2n}$.
\end{theorem}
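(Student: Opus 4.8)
The plan is to prove both directions of the biconditional. For the forward direction, suppose $\varphi \in \Aut(G)$. By \Cref{autfixC} we already know $\varphi(x) = x^r$ for some $r \in \mathcal{U}_{2n}$, so it remains only to pin down $\varphi(y)$. Since every element of $G$ is uniquely of the form $y^a x^b$ by \Cref{uniqpres}, and since $\varphi(y)$ must have order $4$ (as $y$ does, by \Cref{orderfour}), I will argue that $\varphi(y)$ cannot lie in $C_{2n}$: if $n$ is odd, $C_{2n}$ has no order-$4$ elements; if $n$ is even, the only candidates are $x^{n/2}$ and $x^{3n/2}$, and I must rule these out. The cleanest way is to observe that $\varphi(y)$ together with $\varphi(x) = x^r$ must generate all of $G$ (since $\varphi$ is surjective and $x,y$ generate $G$); but $x^r$ already generates $C_{2n}$ (as $r$ is a unit), so if $\varphi(y) \in C_{2n}$ the image would be contained in $C_{2n} \subsetneq G$, a contradiction. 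Hence $\varphi(y) = y x^s$ for some $s \in \mathbb{Z}_{2n}$.

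For the converse, suppose $\varphi$ is a homomorphism with $\varphi(x) = x^r$, $r \in \mathcal{U}_{2n}$, and $\varphi(y) = y x^s$. First I should check this assignment is even well-defined as a homomorphism, i.e. that it respects the three defining relations $x^{2n} = 1$, $y^2 = x^n$, and $y^{-1} x y = x^{-1}$. Checking $\varphi(x)^{2n} = x^{2nr} = 1$ is immediate; checking $\varphi(y)^2 = (yx^s)^2 = y^2 = x^n = \varphi(x^n) = \varphi(x)^n$ uses \Cref{orderfour} (or the direct computation $(yx^s)^2 = y x^s y x^s = y^2 x^{-s+s} = x^n$); and checking $\varphi(y)^{-1}\varphi(x)\varphi(y) = \varphi(x)^{-1}$ reduces, via \Cref{inverse} for the inverse of $yx^s$, to the relation $y^{-1} x y = x^{-1}$ raised to the $r$-th power. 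So $\varphi$ is a genuine endomorphism of $G$. To see it is an automorphism, since $G$ is finite it suffices to prove $\varphi$ is surjective (equivalently injective): the image contains $\varphi(x) = x^r$, which generates $C_{2n}$ because $r$ is a unit, hence contains $x$ itself, and it contains $yx^s$, hence $y = (yx^s)x^{-s}$; so the image is all of $G$.

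The main obstacle, such as it is, will be the relation-checking in the converse and making the "$\varphi(y) \notin C_{2n}$" argument in the forward direction airtight — in particular, being careful in the small even cases where $C_{2n}$ does contain elements of order $4$. The generator-based surjectivity argument handles this uniformly and avoids a case split, so I would emphasize that. Everything else is routine manipulation with the normal form of \Cref{uniqpres} and the formulas of \Cref{inverse}; no deep idea is needed, and the statement is really a structural bookkeeping result setting up the parametrization $\varphi = \varphi_{r,s}$ used in the rest of the paper.
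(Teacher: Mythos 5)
Your argument is correct, and for the ``only if'' direction it is essentially the paper's proof: invoke \Cref{autfixC} to get $\varphi(x)=x^r$ with $r\in\mathcal{U}_{2n}$, then use injectivity/surjectivity to force $\varphi(y)\notin C_{2n}$ (since $x^r$ already generates all of $C_{2n}$, an automorphism cannot send $y$ there), whence $\varphi(y)=yx^s$ by the normal form of \Cref{uniqpres}. Where you go beyond the paper is the converse: the paper's proof stops after the forward direction and never verifies that every assignment $(x,y)\mapsto(x^r,yx^s)$ with $r\in\mathcal{U}_{2n}$ actually defines an automorphism, even though the ``if'' half of the statement (and the subsequent parametrization of $\Aut(G)$ by $\mathcal{U}_{2n}\times\mathbb{Z}_{2n}$) depends on it. Your relation checks are the right ones and they all go through: $\varphi(x)^{2n}=1$ is immediate; $(yx^s)^2=x^n=\varphi(x)^n$ uses that $r$ is odd (forced by $\gcd(r,2n)=1$) so that $x^{rn}=x^n$ --- worth stating explicitly, since $\varphi(x)^n=x^{rn}$, not literally $x^n$; and $(yx^s)^{-1}x^r(yx^s)=x^{-r}$ follows from \Cref{inverse}. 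Surjectivity of the resulting endomorphism from $\langle x^r\rangle=C_{2n}$ and $y=(yx^s)x^{-s}$, together with finiteness of $G$, finishes the job. So your write-up is a strict improvement in completeness while remaining the same in spirit; the only caveat is to keep the standing hypothesis $n>2$ in view, since \Cref{autfixC} fails for $\Dc_2$ and the statement itself is false there.
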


\begin{proof}
Let $\varphi$ be an arbitrary automorphism of $G$. By \cref{autfixC}, $\varphi(x)=x^b$ for some $b \in \mathcal{U}_{2n}$, and since $\varphi$ preserves order, $\varphi(x)=x^r$ for some $r\in \mathcal{U}_{2n}$. Since $\varphi$ is injective, $\varphi(y)=yx^s$ for some $s\in \mathbb{Z}_{2n}$.
\end{proof}

\begin{corollary}
$\Aut(Dc_n)$ is a group of the form $\big(\Aut(G),G\big)$ as described in \cref{autgg}.
\end{corollary}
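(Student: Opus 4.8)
The plan is to turn the parametrization of automorphisms from \cref{automorphisms} into an explicit isomorphism, and to recognize the resulting multiplication as precisely the product of \cref{autgg} applied to the cyclic subgroup $K := \langle x\rangle \cong C_{2n}$; that is, I will show $\Aut(\Dc_n) \cong (\Aut(K),K)$, which is the sense in which $\Aut(\Dc_n)$ has the form $(\Aut(G),G)$ described in \cref{autgg}.

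First I would set up notation. For $r\in\mathcal{U}_{2n}$ and $s\in\mathbb{Z}_{2n}$ let $\varphi_{r,s}$ be the automorphism with $\varphi_{r,s}(x)=x^r$ and $\varphi_{r,s}(y)=yx^s$; by \cref{automorphisms} this exists, is an automorphism, and every automorphism arises this way. I would check that $(r,s)\mapsto\varphi_{r,s}$ is a bijection from $\mathcal{U}_{2n}\times\mathbb{Z}_{2n}$ onto $\Aut(\Dc_n)$: surjectivity is \cref{automorphisms}, and injectivity follows from \cref{uniqpres}, since an endomorphism is determined by the images of the generators and, $x$ having order $2n$, the element $x^r$ determines $r\bmod 2n$ while $yx^s$ determines $s\bmod 2n$.

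Next I would compute the composition law. Using $\varphi_{r,s}(x^t)=x^{rt}$ and $\varphi_{r,s}(yx^t)=(yx^s)(x^{rt})=yx^{s+rt}$, one obtains $\varphi_{r,s}\circ\varphi_{r',s'}=\varphi_{rr',\,s+rs'}$ with exponents read modulo $2n$; note $rr'\in\mathcal{U}_{2n}$, so the composite is again of the listed form, consistent with $\Aut(\Dc_n)$ being a group. Then I would exhibit the isomorphism: writing $\alpha_r\in\Aut(C_{2n})$ for the map $x\mapsto x^r$ on $K=\langle x\rangle$, and recalling that $r\mapsto\alpha_r$ is an isomorphism from $\mathcal{U}_{2n}$ onto $\Aut(C_{2n})$, define $\Phi\colon\Aut(\Dc_n)\to(\Aut(K),K)$ by $\Phi(\varphi_{r,s})=(\alpha_r,x^s)$. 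By the previous step $\Phi$ is a bijection, and it is a homomorphism because, by the product in \cref{autgg}, $(\alpha_r,x^s)(\alpha_{r'},x^{s'})=(\alpha_r\circ\alpha_{r'},\,x^s\alpha_r(x^{s'}))=(\alpha_{rr'},x^{s+rs'})=\Phi(\varphi_{r,s}\circ\varphi_{r',s'})$. That completes the argument.

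The step needing the most care is this last matching of products: one must track the direction of composition in $\varphi_{r,s}\circ\varphi_{r',s'}$, observe that the twist $g\mapsto g\,\phi(h)$ in \cref{autgg} is exactly what produces the cross term $rs'$ in the $y$-exponent, and use that the first coordinate must lie in $\Aut(C_{2n})$ rather than merely $\mathrm{End}(C_{2n})$ — which is precisely why the restriction $r\in\mathcal{U}_{2n}$ coming from \cref{autfixC} and \cref{automorphisms} is essential. Everything else is routine bookkeeping.
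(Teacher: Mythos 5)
Your proof is correct and follows essentially the same route as the paper: the paper gives no explicit proof of this corollary, but the paragraph immediately following it identifies $\Aut(\Dc_n)$ with pairs $(r,s)\in\mathcal{U}_{2n}\times\mathbb{Z}_{2n}$ under the product $(r,s)(p,q)=(rp,\,s+rq)$, which is exactly your isomorphism onto $\big(\Aut(C_{2n}),C_{2n}\big)$ with the product of \cref{autgg}. Your added care about injectivity (via \cref{uniqpres}) and the direction of composition is a welcome tightening of the same argument.
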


We can view elements of $\Aut(G)$ as ordered pairs in $\mathcal{U}_{2n} \times \bbz_{2n}$. Then the group operation in the first coordinate is multiplication within $\mathcal{U}_{2n}$, and the operation in the second coordinate is automorphism-twisted addition in $\bbz_{2n}$ since the composition of two automorphisms $\varphi_{(r,s)} \circ \varphi_{(p,q)}$ takes the form $(r,s)(p,q) = (rp, s + rq)$. Here multiplication by an element of $\mathcal{U}_{2n}$ in $\bbz_{2n}$ is an automorphism of $\bbz_{2n}$.  The identity element is $(1,0)$, and the inverse of an element $(r,s)$ is $(r,s)^{-1} = (r^{-1}, -r^{-1}s)$. 

Based on \cref{automorphisms}, every automorphism of $\Dc_n$ can be written uniquely as $\varphi_{(r,s)}$, where $\varphi_{(r,s)}(x) = x^r$ and $\varphi_{(r,s)}(y) = yx^s$. When there is no confusion, we may omit the subscript. 

\begin{proposition}\label{innfixC}
An automorphism $\varphi$ of $G$ is inner if and only if $s$ is even and either $r = 1$ or $2n-1$.
\end{proposition}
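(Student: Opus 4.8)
The plan is to compute the inner automorphisms outright. By \cref{uniqpres} (and its corollary) every element of $G$ is either $x^b$ or $yx^b$ with $0 \le b \le 2n-1$, so it suffices to evaluate $\Inn(x^b)$ and $\Inn(yx^b)$ on the generators $x$ and $y$ and then read off the parameters $(r,s)$ in the description $\varphi_{(r,s)}(x) = x^r$, $\varphi_{(r,s)}(y) = yx^s$.

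First I would treat conjugation by a power of $x$. Using the identity $x^b y = y x^{-b}$ recorded in the proof of \cref{uniqpres} together with the inverse formula $\left(x^b\right)^{-1} = x^{2n-b}$ from \cref{inverse}, one gets $\Inn(x^b)(x) = x$ and $\Inn(x^b)(y) = x^b y x^{-b} = y x^{-2b}$, so $\Inn(x^b) = \varphi_{(1,\,-2b)}$. Next, conjugation by $yx^b$: using $(yx^b)^{-1} = yx^{b+n}$ from \cref{inverse}, the relation $y^2 = x^n$, and the commutation identity above, a short computation gives $\Inn(yx^b)(x) = x^{2n-1} = x^{-1}$ and $\Inn(yx^b)(y) = yx^{2b}$, so $\Inn(yx^b) = \varphi_{(2n-1,\,2b)}$.

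As $b$ ranges over $\bbz_{2n}$, both $-2b$ and $2b$ range over precisely the $n$ even residues modulo $2n$. Hence the set of inner automorphisms is exactly $\{\varphi_{(1,s)} \st s \text{ even}\} \cup \{\varphi_{(2n-1,s)} \st s \text{ even}\}$, which is the asserted description: every inner automorphism has $r \in \{1, 2n-1\}$ and even $s$, and conversely each such $\varphi_{(r,s)}$ is realized by the conjugations computed above. As a consistency check, this set has $2n$ elements, matching $|\Inn(G)| = [G : Z(G)] = 4n/2 = 2n$ by \cref{centergrp}.

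The arguments are purely computational; the only points requiring care are tracking the sign change when moving a power of $x$ past $y$ and using $(yx^b)^{-1} = yx^{b+n}$ rather than $yx^{-b}$. I do not anticipate any genuine obstacle beyond this bookkeeping.
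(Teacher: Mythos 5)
Your proposal is correct and follows essentially the same route as the paper: compute $\Inn(x^b)=\varphi_{(1,-2b)}$ and $\Inn(yx^b)=\varphi_{(2n-1,2b)}$ and observe that $\pm 2b$ sweeps out exactly the even residues (the paper phrases the converse by exhibiting the conjugating elements $x^{-s/2}$ and $yx^{s/2}$, which is the same observation). The order count $|\Inn(G)|=2n$ is a nice sanity check not present in the paper but not needed.
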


\begin{proof}
Consider $\Inn(g)$ for some $g \in G$. Either $g = x^b$ or $g = yx^b$, where $b \in \bbz_{2n}$. Suppose $g = x^b$. Then
	\[ \Inn(x^b)(x) = (x^b) x (x^{-b}) = x, \]
	and
	\[ \Inn(x^b)(y) = (x^b) y (x^{-b}) = yx^{-2b}. \]
Next, let $g=yx^b$. Then
	\[ \Inn(yx^b)(x) = (y x^b) x (yx^{b+n}) = y^2 x^{-b-1+b+n} = x^n x^{n-1} = x^{2n-1}, \]
	and
	\[ \Inn(yx^b)(y) = (yx^b) y (yx^{b+n}) = y x^b y^2 x^{b+n} = y x^{b+n+b+n} = yx^{2b}. \] \\
Conversely, let $r = 1 \mbox{ and } s$ be even. Note that conjugation by $x^{-s/2}$ gives 
$$x^{-s/2}xx^{s/2} = x$$
and $$x^{-s/2}yx^{s/2} = yx^{s}.$$
Thus $\varphi \in \Inn(G)$.  Similarly, if $r = 2n-1 \mbox{ and } s$ is even, then conjugation by $yx^{s/2}$ gives 
$$(yx^{s/2})x(yx^{s/2+n}) = y^2x^{-s/2-1}x^{s/2+n} = x^{n-1+n} = x^{2n-1}$$
and 
$$(yx^{s/2})y(yx^{s/2+n}) = yx^{s}.$$
Again, $\varphi \in \Inn(G)$.
\end{proof}

\begin{remark}\label{outer}
Note that if $\varphi$ is an outer automorphism, then either $s$ is odd, or else $r \notin \{1,2n-1\}$.
\end{remark}

\begin{lemma}\label{char_order_m}
Let $\varphi \in \Aut(G)$. Then $\varphi^{k}=\id$ if and only if $r^k \equiv 1$ and $s(1+r+r^2+...+r^{k-1}) \equiv 0$.
\end{lemma}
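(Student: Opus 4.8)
The plan is to compute $\varphi^k$ explicitly using the ordered-pair description of $\Aut(G)$ developed just before the lemma, and then to read off the identity condition coordinate by coordinate. Recall that $\varphi = \varphi_{(r,s)}$ corresponds to the pair $(r,s) \in \mathcal{U}_{2n} \times \bbz_{2n}$, with composition law $(r,s)(p,q) = (rp,\, s + rq)$ and identity $(1,0)$. Since $\varphi^k = \id$ is equivalent to $(r,s)^k = (1,0)$, the entire statement reduces to finding the $k$-th power of $(r,s)$ under this twisted multiplication.

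First I would prove by induction on $k$ that $(r,s)^k = \bigl(r^k,\ s(1 + r + r^2 + \cdots + r^{k-1})\bigr)$. The base case $k=1$ is immediate. For the inductive step, assuming $(r,s)^{k-1} = \bigl(r^{k-1}, s(1 + r + \cdots + r^{k-2})\bigr)$, I apply the composition law:
\[
(r,s)^k = (r,s)\,(r,s)^{k-1} = \Bigl(r \cdot r^{k-1},\ s + r \cdot s(1 + r + \cdots + r^{k-2})\Bigr) = \Bigl(r^k,\ s(1 + r + \cdots + r^{k-1})\Bigr),
\]
where all arithmetic in the first coordinate is in $\mathcal{U}_{2n}$ and in the second coordinate is modulo $2n$. (One could equally well expand $(r,s)(r,s)^{k-1}$ using the right factor; either order works since the formula is symmetric enough.) Once this closed form is established, $\varphi^k = \id$ means exactly that $\bigl(r^k, s(1 + r + \cdots + r^{k-1})\bigr) = (1,0)$ in $\mathcal{U}_{2n} \times \bbz_{2n}$, which is precisely the pair of congruences $r^k \equiv 1$ and $s(1 + r + \cdots + r^{k-1}) \equiv 0$ modulo $2n$. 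Conversely, if both congruences hold, the closed form shows $(r,s)^k = (1,0)$, hence $\varphi^k = \id$.

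I do not expect any real obstacle here: the only thing to be careful about is that the composition rule $(r,s)(p,q) = (rp, s+rq)$ was recorded in the paragraph after \cref{automorphisms} rather than proved as a displayed lemma, so I would either cite that paragraph or re-verify it in one line from $\varphi_{(r,s)} \circ \varphi_{(p,q)}(y) = \varphi_{(r,s)}(yx^q) = yx^s x^{rq} = yx^{s+rq}$ and $\varphi_{(r,s)} \circ \varphi_{(p,q)}(x) = x^{rp}$. The induction itself is routine bookkeeping with the geometric-type sum $1 + r + \cdots + r^{k-1}$, and no case analysis on $n$ or on the parity of $s$ is needed, since the argument is purely formal in the group $\big(\Aut(G),G\big)$.
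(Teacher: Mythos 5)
Your proposal is correct and is essentially the paper's own argument in different notation: the paper iterates $\varphi$ directly on the generators to get $\varphi^k(x)=x^{r^k}$ and $\varphi^k(y)=yx^{s(1+r+\cdots+r^{k-1})}$, which is exactly your induction $(r,s)^k=\bigl(r^k,\ s(1+r+\cdots+r^{k-1})\bigr)$ expressed via the composition law $(r,s)(p,q)=(rp,\,s+rq)$. Your closing observation that $\varphi^k=\id$ iff it fixes both generators (equivalently, iff the pair equals $(1,0)$, using the uniqueness of the representation $\varphi_{(r,s)}$) matches the paper's concluding step.
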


\begin{proof}
From the definition of $\varphi$ we obtain 
$$\varphi^{k}(x)= \varphi^{k-1}(x^r) = \varphi^{k-1}(x)^r = \varphi^{k-2}(x^r)^r = \varphi^{k-2}(x^{r^2}) = \cdots = x^{r^{k}}$$
and 
\begin{align*}
\varphi^{k}(y) &= \varphi^{k-1}(yx^s)\\
&= \varphi^{k-2}(yx^sx^{rs})\\
&= \varphi^{k-2}\left(yx^{s(1+r)}\right)\\
& \hspace{1cm}\vdots\\
&= yx^{s(1+r+r^2+\dots+r^{k-1})}.
\end{align*}
Then $\varphi^{k}=\id$ if and only if it fixes the generators $x$ and $y$.
\end{proof}

We can also count the solutions to the equation $r^2 \equiv 1$ under certain conditions of $n$ using the following result from \cite{OmOmOu09}.

\begin{lemma}
\label{sqrootunity}
Let $S_2(2n)$ denote the solutions to $r^2 \equiv 1 \mod 2n$.  Then
	\[
	|S_2(2n) | =
		\begin{cases}
		2^{\omega(k)}, n=k \text{ where } k \text{ is odd } \\
		2^{\omega(k)+1}, n=2k \text{ where } k \text{ is odd } \\
		2^{\omega(k)+2}, n=2^{\alpha}k \text{ where } k \text{ is odd and } \alpha> 1,
		\end{cases}
	\]
and $\omega(k)$ is the number of distinct prime factors of $k$.
\end{lemma}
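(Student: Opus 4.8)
The plan is to reduce to prime powers via the Chinese Remainder Theorem and then count square roots of unity in each factor. Write the prime factorization $2n = 2^{\beta}p_1^{a_1}\cdots p_t^{a_t}$ with the $p_i$ distinct odd primes; then $\mathbb{Z}_{2n} \cong \mathbb{Z}_{2^{\beta}}\times\prod_{i=1}^{t}\mathbb{Z}_{p_i^{a_i}}$ as rings, and $r^2\equiv 1 \pmod{2n}$ holds if and only if $r^2\equiv 1$ modulo each prime-power factor. Hence $|S_2(2n)|$ is the product of the numbers of square roots of unity in the factors $\mathbb{Z}_{2^{\beta}}$ and $\mathbb{Z}_{p_i^{a_i}}$.

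Next I would count square roots of unity modulo an odd prime power $p^a$. Since $\mathbb{Z}_{p^a}^{\times}$ is cyclic of even order, it contains exactly one element of order $2$, so $r^2\equiv 1 \pmod{p^a}$ has exactly the two solutions $r\equiv\pm 1$. Thus the odd part $p_1^{a_1}\cdots p_t^{a_t}$ of $2n$, whose number of distinct prime factors is $\omega$ of that odd part, contributes a factor of $2^{t}$.

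Then I would handle the $2$-part according to $\beta$. For $\beta=1$ there is only the residue $r\equiv 1$, giving one solution; for $\beta=2$ we have $\mathbb{Z}_4^{\times}=\{1,3\}$, so both residues square to $1$, giving two solutions; and for $\beta\ge 3$ one uses the classical structure $\mathbb{Z}_{2^{\beta}}^{\times}\cong\mathbb{Z}_2\times\mathbb{Z}_{2^{\beta-2}}$ (equivalently, a direct analysis of $2^{\beta}\mid(r-1)(r+1)$ using $\gcd(r-1,r+1)=2$ for odd $r$) to get exactly the four solutions $r\equiv\pm 1,\ 2^{\beta-1}\pm 1$. Finally I would translate the three cases on $n$ into cases on $\beta$: $n$ odd means $2n=2k$ with $k=n$ odd, so $\beta=1$; $n=2k$ with $k$ odd means $\beta=2$; and $n=2^{\alpha}k$ with $\alpha>1$ and $k$ odd means $\beta=\alpha+1\ge 3$. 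Multiplying the $2$-part count ($1$, $2$, or $4$) by $2^{\omega(k)}$ gives the three displayed formulas.

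The main obstacle is the case $\beta\ge 3$: unlike for odd primes, $\mathbb{Z}_{2^{\beta}}^{\times}$ is not cyclic, so the "unique element of order $2$" argument fails and one must invoke the explicit $2$-group structure, or argue directly that $r^2\equiv 1\pmod{2^{\beta}}$ forces $2^{\beta}$ to divide $(r-1)(r+1)$ where exactly one of $r\pm 1$ is divisible by $4$ and the two factors share only a factor of $2$. Everything else is routine bookkeeping with the Chinese Remainder Theorem and the definition of $\omega$.
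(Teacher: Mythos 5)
Your argument is correct and complete, but it is worth noting that the paper itself does not prove this lemma at all: its ``proof'' consists solely of the citation \emph{See \cite{OmOmOu09}}. So you have supplied a genuine, self-contained proof where the authors deferred to the literature. Your route is the standard one and all the pieces check out: the Chinese Remainder Theorem reduces the count to a product over the prime-power factors of $2n$; cyclicity of $(\mathbb{Z}/p^a\mathbb{Z})^{\times}$ for odd $p$ gives exactly the two roots $\pm 1$ there; and the $2$-part splits into the counts $1$, $2$, $4$ according as $\beta=1$, $\beta=2$, $\beta\ge 3$, which you justify either by the structure theorem $\mathbb{Z}_{2^{\beta}}^{\times}\cong\mathbb{Z}_2\times\mathbb{Z}_{2^{\beta-2}}$ or by the elementary divisibility argument on $(r-1)(r+1)$ with $\gcd(r-1,r+1)=2$. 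The translation of the three cases on $n$ into $\beta=1,2,\alpha+1$ and the identification $t=\omega(k)$ (since $k$ is the odd part of $n$, hence of $2n$) are exactly right, and the formulas also degenerate correctly when $k=1$. What your approach buys over the paper's is transparency and self-containment at essentially no cost in length; if you wanted to match the paper you could simply cite \cite{OmOmOu09}, but your proof is preferable for a reader who wants to see why the three cases arise.
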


\begin{proof}
See \cite{OmOmOu09}.
\end{proof}

\Cref{char_order_m} leads to the following simple characterization of the involutions of $G$.

\begin{corollary}
\label{char involution}
Let $\varphi \in \Aut(G)$. Then $\varphi$ is an involution if and only if $r^2 \equiv 1$ and $s(1+r) \equiv 0$.
\end{corollary}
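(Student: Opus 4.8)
The plan is to obtain this as an immediate specialization of \Cref{char_order_m}. By definition $\varphi$ is an involution exactly when $\varphi^2 = \id$ (we may, if desired, exclude the identity automorphism at the very end). So the first step is simply to set $k = 2$ in \Cref{char_order_m}, which tells us that $\varphi_{(r,s)}^2 = \id$ if and only if $r^2 \equiv 1$ and $s\big(1 + r + r^2 + \cdots + r^{k-1}\big) \equiv 0$.

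The second step is to evaluate the geometric sum appearing in the second congruence when $k = 2$: it has only the two terms $1$ and $r$, so it equals $1 + r$. Hence the pair of conditions reduces to $r^2 \equiv 1$ and $s(1+r) \equiv 0$, which is exactly the claim. If one wishes to insist that an involution have order precisely two, one notes additionally that among the pairs $(r,s) \in \mathcal{U}_{2n} \times \bbz_{2n}$ satisfying both congruences, the pair $(1,0)$ is the only one yielding $\varphi = \id$, and it may simply be omitted; this is the only point where the statement of the corollary is mildly ambiguous.

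There is essentially no obstacle here: all the substantive content already resides in \Cref{char_order_m}, whose proof handled the inductive computation of $\varphi^k(x)$ and $\varphi^k(y)$. The corollary is a one-line consequence of substituting $k = 2$, and the only thing to be careful about is the (harmless) bookkeeping convention regarding whether $\id$ is counted as an involution.
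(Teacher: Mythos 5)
Your proof is correct and is exactly the intended argument: the paper states this as an immediate corollary of \Cref{char_order_m} (with no written proof), obtained by setting $k=2$ so that the geometric sum $1 + r + \cdots + r^{k-1}$ collapses to $1+r$. Your remark about whether $\id$ should be excluded is a reasonable bookkeeping point that the paper itself glosses over.
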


When $\varphi$ is inner, the involutions of $G$ are succinctly characterized as follows.
\begin{lemma}\label{inninv}
$\Inn(g)$ is an involution if and only if $g$ has order $4$.
\end{lemma}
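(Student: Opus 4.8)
The plan is to reduce the claim about $\Inn(g)$ being an involution to the explicit parametrization of inner automorphisms established in \Cref{innfixC}. Recall that $\Inn(g) = \Inn(h)$ exactly when $g$ and $h$ differ by a central element, and by \Cref{centergrp} the center is $\{1,x^n\}$, so there is no loss in treating the computation case by case on whether $g \in C_{2n}$ or $g = yx^b$. From \Cref{innfixC} we have the concrete formulas: $\Inn(x^b)(x)=x$, $\Inn(x^b)(y)=yx^{-2b}$, and $\Inn(yx^b)(x)=x^{2n-1}$, $\Inn(yx^b)(y)=yx^{2b}$. In the notation $\varphi_{(r,s)}$, this means $\Inn(x^b) = \varphi_{(1,-2b)}$ and $\Inn(yx^b) = \varphi_{(2n-1,2b)}$.

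First I would handle $g = x^b \in C_{2n}$. Here $\Inn(x^b) = \varphi_{(1,-2b)}$, which is the identity precisely when $-2b \equiv 0 \pmod{2n}$, i.e. $b \equiv 0$ or $b \equiv n$; these are exactly the $g$ with $g \in Z(G)$, where $\Inn(g) = \id$ is not an involution. Otherwise $\Inn(x^b)$ is a nontrivial automorphism with $r=1$; by \Cref{char involution} it is an involution iff $s(1+r) = -4b \equiv 0 \pmod{2n}$, i.e. $2b \equiv 0 \pmod n$, i.e. $b \equiv 0$ or $b \equiv n/2$ or $b \equiv n$ or $b \equiv 3n/2$ modulo $2n$ (the last three only when $n$ is even). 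Discarding $b \in \{0,n\}$, the surviving possibilities are $b = n/2, 3n/2$ with $n$ even — and by \Cref{orderfour} these are precisely the elements of $C_{2n}$ of order $4$. Second, for $g = yx^b$, we have $\Inn(yx^b) = \varphi_{(2n-1,2b)}$; since $r = 2n-1$ satisfies $r^2 \equiv 1$ and $s(1+r) = 2b \cdot 2n \equiv 0 \pmod{2n}$ automatically, \Cref{char involution} shows $\Inn(yx^b)$ is always an involution (it is never the identity, as $r \ne 1$). By \Cref{orderfour}, every element of the form $yx^b$ has order $4$. Combining the two cases: $\Inn(g)$ is an involution iff $g$ has order $4$.

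I do not anticipate a serious obstacle here; the statement follows by assembling \Cref{innfixC}, \Cref{char involution}, \Cref{orderfour}, and \Cref{centergrp}. The one point that needs care is the bookkeeping in the $C_{2n}$ case: one must correctly excise the central elements (order $1$ and $2$) from the solution set of $s(1+r)\equiv 0$ and check that what remains is exactly $\{x^{n/2}, x^{3n/2}\}$ for even $n$ and nothing for odd $n$, matching the order-$4$ elements of $C_{2n}$ listed in \Cref{orderfour}. A secondary subtlety is the standing assumption $n > 2$ in this section, which ensures the formulas from \Cref{innfixC} (derived under that hypothesis) apply; the edge cases $\Dc_1 \cong C_4$ and $\Dc_2 \cong Q_8$ are excluded by convention, so no separate treatment is needed.
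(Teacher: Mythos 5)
Your proof is correct, but it takes a genuinely different route from the paper's. The paper argues coordinate-free: since $\Inn(g)^2=\Inn(g^2)$ and conjugation by an element is trivial exactly when that element is central, $\Inn(g)$ squares to the identity precisely when $g^2\in Z(G)=\{1,x^n\}$ (\cref{centergrp}); the case $g^2=1$ forces $g\in\{1,x^n\}$ and hence $\Inn(g)=\id$, while $g^2=x^n$ means $g$ has order $4$, and \cref{orderfour} supplies the converse. You instead push everything through the $(r,s)$-coordinates of \cref{innfixC} and the congruence criterion of \cref{char involution}, solving $4b\equiv 0\pmod{2n}$ by hand in the $C_{2n}$ case. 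Both are sound; the paper's version is shorter and avoids the modular bookkeeping, while yours has the merit of making explicit that \cref{char involution} really characterizes $\varphi^2=\id$ rather than order exactly $2$, so the identity cases must be excised --- a point the paper's statement glosses over. One trivial slip in your bookkeeping: among the solutions of $2b\equiv 0\pmod n$, the solution $b\equiv n$ occurs for every $n$, not only for even $n$; since it is discarded as central in either case, nothing in the argument is affected.
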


\begin{proof}
$\Inn(g)$ is an involution if and only if $g^2\ell g^{-2} = \ell$ for any $\ell \in G$, so $g^2 \in Z(G) = \{1, x^n\}$. If $g^2 = 1$ then $g = x^n$ and $\Inn(x^n) = \id$. If $g^2 = x^n$, then $g$ has order 4. Assume $g$ has order 4. Then by \cref{orderfour}, $g = yx^b, x^{\frac{n}{2}}$ or $x^{\frac{3n}{2}}$. Notice that $(yx^b)^2 = (x^{\frac{n}{2}})^2  = (x^{\frac{3n}{2}})^2 = x^n \in Z(G)$.
\end{proof}

\begin{definition}
\label{isomorphydef}
Two automorphisms $\varphi$ and $\vartheta$ are said to be \emph{isomorphic}, and we write $\varphi \sim \vartheta$, if there exists $\sigma \in \Aut(G)$ such that $\sigma\varphi = \vartheta \sigma$. Two isomorphic automorphisms are said to be in the same \emph{isomorphy class}.
\end{definition}

\begin{proposition}
\label{isomorphy}
Two automorphisms $\varphi_{(r,s)}$ and $\varphi_{(p,q)}$ are isomorphic if and only if $r = p$ and $qu - s$ is a multiple of $r - 1$ for some $u \in \mathcal{U}_{2n}$.
\end{proposition}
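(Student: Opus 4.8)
The plan is to translate the defining relation $\sigma\varphi_{(r,s)}=\varphi_{(p,q)}\sigma$ into the coordinate calculus on $\mathcal{U}_{2n}\times\bbz_{2n}$ set up just after \cref{automorphisms}, and then read off the condition directly. First I would observe that, by \cref{automorphisms}, every $\sigma\in\Aut(G)$ has the form $\sigma=\varphi_{(u,v)}$ for a unique $u\in\mathcal{U}_{2n}$ and $v\in\bbz_{2n}$, and that since $\sigma$ is invertible, $\sigma\varphi_{(r,s)}=\varphi_{(p,q)}\sigma$ is equivalent to the conjugacy statement $\sigma\varphi_{(r,s)}\sigma^{-1}=\varphi_{(p,q)}$. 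Thus the task is to compute this conjugate in closed form.

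Next I would carry out the conjugation using the two facts recorded after \cref{automorphisms}: the composition rule $\varphi_{(a,b)}\circ\varphi_{(c,d)}=\varphi_{(ac,\,b+ad)}$ and the inversion formula $\varphi_{(u,v)}^{-1}=\varphi_{(u^{-1},\,-u^{-1}v)}$. A short two-step computation gives
\[
\varphi_{(u,v)}\,\varphi_{(r,s)}\,\varphi_{(u,v)}^{-1}=\varphi_{\bigl(r,\ us+v(1-r)\bigr)}.
\]
In particular the first coordinate is unchanged, so $\sigma\varphi_{(r,s)}\sigma^{-1}=\varphi_{(p,q)}$ forces $p=r$; and, granting $p=r$, such a $\sigma$ exists if and only if there are $u\in\mathcal{U}_{2n}$ and $v\in\bbz_{2n}$ with $q\equiv us+v(1-r)$.

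Finally I would simplify this congruence. As $v$ ranges over $\bbz_{2n}$, the element $v(1-r)$ ranges over the full cyclic subgroup generated by $r-1$ (equivalently, over all multiples of $\gcd(r-1,2n)$), so the existence of a suitable $v$ is exactly the condition that $q-us$ be a multiple of $r-1$ in $\bbz_{2n}$. Multiplying through by the unit $u^{-1}$, which is a bijection of $\bbz_{2n}$ preserving the subgroup $\langle r-1\rangle$, and then renaming $u^{-1}$ as $u$, this becomes: $qu-s$ is a multiple of $r-1$ for some $u\in\mathcal{U}_{2n}$ — the asserted condition, with both implications obtained from these reversible steps. I do not expect a genuine obstacle; the only points requiring care are keeping the noncommutative coordinate multiplication straight in the conjugation computation, and in the last step recording that "multiple of $r-1$" is meant inside $\bbz_{2n}$ and that the substitution $u\mapsto u^{-1}$ is harmless since units act bijectively on $\bbz_{2n}$.
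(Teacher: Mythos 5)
Your proposal is correct and is essentially the paper's argument: both translate $\sigma\varphi=\vartheta\sigma$ into the coordinate composition rule $\varphi_{(a,b)}\circ\varphi_{(c,d)}=\varphi_{(ac,\,b+ad)}$ and read off $p=r$ from the first coordinate and the $\langle r-1\rangle$ condition from the second. The only cosmetic difference is that you phrase it as conjugation $\sigma\varphi_{(r,s)}\sigma^{-1}=\varphi_{(p,q)}$ (hence need the inversion formula and the harmless substitution $u\mapsto u^{-1}$ at the end), while the paper simply equates the two compositions $\varphi_{(u,v)}\circ\varphi_{(p,q)}$ and $\varphi_{(r,s)}\circ\varphi_{(u,v)}$ directly.
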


\begin{proof}
By \cref{isomorphydef}, $\varphi_{(r,s)} \sim \varphi_{(p,q)}$ implies there is some $\sigma = \varphi_{(u,v)} \in \Aut(G)$ such that $\sigma\varphi_{(p,q)} = \varphi_{(r,s)} \sigma$. Direct computation shows that $\varphi_{(r,s)} \circ \varphi_{(u,v)} = \varphi_{(ru, s+rv)}$ and $\varphi_{(u,v)} \circ \varphi_{(p,q)} = \varphi_{(pu, v+qu)}$. These equations hold if and only if $ru \equiv pu$ and $s+rv \equiv v+qu$. Since $r$ and $p$ are both in $\bbz_{2n}$, the first equation simplifies to $r \equiv p$ which implies that $r = p$. Similarly, since $u \in \mathcal{U}_{2n}$, the second equation reduces to $qu - s \equiv rv - v = v(r-1)$.
\end{proof}
 
\begin{example}
\label{isomorphy_1n}
Consider the isomorphy class of $\varphi_{(1,n)}$. \Cref{isomorphy} implies that the  possible automorphisms in this class are of the form $\varphi_{(1,q)}$, where $q$ satisfies  $qu - n \equiv 0$. Hence $n \equiv qu$ for some $u \in \mathcal{U}_{2n}$.
\end{example}

\begin{example}
Let $n = 3$, and consider $\varphi_{(5,2)}$ and $\varphi_{(5,0)}$. From the proof of \cref{isomorphy}, it follows that $\varphi_{(5,1)} \circ \varphi_{(5,2)} =\varphi_{(25,2+5(1))}=\varphi_{(1,1)}$ and $\varphi_{(5,0)} \circ \varphi_{(5,1)} =\varphi_{(25,1+5(0))}=\varphi_{(1,1)}$,which implies that $\varphi_{(5,2)} \sim \varphi_{(5,0)}$.  
\end{example}

The following proposition characterizes the inner automorphisms $\varphi$ of order $k > 2$.

\begin{proposition}\label{inn_inv_val}
Suppose $n>2$ and $\Inn(g)$ has order $k$, where $g \in G$.  The inner automorphisms are one of the following types
\begin{enumerate}[(a)]
\item $g^k=1$, $g=x^b$, and $bk \equiv 0 \pmod{2n}$,
\item $g^k=x^n$, $g=x^b$, and $bk \equiv n \pmod{2n}$
\item $g=yx^b$ and $\Inn(g)$ is an involution.
\end{enumerate}
\end{proposition}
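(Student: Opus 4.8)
The plan is to argue by a direct case analysis on the form of $g$. By \cref{uniqpres} (and its corollary), every element of $G$ is either $x^b$ or $yx^b$ for some $b \in \bbz_{2n}$, so it suffices to treat these two cases separately.

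First I would dispose of the case $g = yx^b$. Here \cref{orderfour} tells us that $g$ has order $4$, and a one-line computation gives $g^2 = (yx^b)^2 = x^n \in Z(G)$ (using \cref{centergrp}); so \cref{inninv} immediately yields that $\Inn(g)$ is an involution. This is case (c), and in fact $k = 2$ is forced, so nothing further is required here.

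Next I would handle $g = x^b$. The key observation is that $\Inn$ is a homomorphism from the finite group $G$ into $\Aut(G)$, so $\Inn(x^b)^m = \Inn(x^{bm})$ for every $m$ and $\Inn(x^b)$ has some finite order $k$. Since an inner automorphism $\Inn(z)$ is trivial exactly when $z \in Z(G)$, \cref{centergrp} gives that $\Inn(x^{bm}) = \id$ if and only if $x^{bm} \in \{1, x^n\}$, i.e.\ if and only if $bm \equiv 0$ or $bm \equiv n \pmod{2n}$. Hence $k$ is the least positive integer satisfying one of these two congruences, so at $m = k$ at least one of them must hold. If $bk \equiv 0 \pmod{2n}$, then $g^k = x^{bk} = 1$, which is case (a); otherwise $bk \equiv n \pmod{2n}$ and $g^k = x^{bk} = x^n$, which is case (b). Together with the previous paragraph, this exhausts all possibilities for $g$.

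The proposition is really an organized restatement of \cref{orderfour}, \cref{centergrp}, and \cref{inninv}, so I do not anticipate any genuine obstacle. The only point needing a moment's care is verifying, in the case $g = x^b$, that subcases (a) and (b) both cover everything and do not overlap: exhaustiveness is just the minimality defining $k$, and they cannot hold simultaneously at $m = k$ since subtracting the two congruences would force $n \equiv 0 \pmod{2n}$, which is impossible for $n \ge 1$.
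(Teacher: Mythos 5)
Your proposal is correct and follows essentially the same route as the paper: dispatch $g = yx^b$ via \cref{orderfour} and \cref{inninv}, and for $g = x^b$ reduce $\Inn(g)^k = \id$ to $g^k \in Z(G) = \{1, x^n\}$ using \cref{centergrp}, yielding the two congruences. Your added remarks (that $k=2$ is forced in case (c) and that cases (a) and (b) cannot overlap at $m=k$) are correct refinements but do not change the argument.
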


\begin{proof}
For the case $g=yx^b$ we know that $g^4=1$ and so $\Inn(g)$ is an inner involution by \cref{char involution} and \cref{orderfour}.  Let $\Inn(g)^k(z) = \id$, i.e., $g^kzg^{-k} = z$ for all $z \in G$. Thus $g^k \in Z(G) = \{1, x^n\}$, by \cref{centergrp}.  Let $g^k = 1$, and suppose $g = x^b$. Then $x^{bk} = 1$ and hence $bk \equiv 0$. Next, let $g^k = x^n$, and suppose $g = x^b$. Then $g^k = x^{bk} = x^n$, and hence $bk \equiv n$. 
\end{proof}

\subsection{Automorphisms of $\Dc_{2}$}
We consider the dicyclic group $\Dc_2$ as a special case, since $\Dc_2$ is isomorphic to the quaternion group $Q$. Moreover, since all non-central elements have order 4, \cref{autfixC} does not hold, and there are additional automorphisms. The dicyclic group $\Dc_2$ is presented as  
$$\Dc_2 = \{1, x, x^2, x^3, y, yx, yx^2, yx^3\}$$
subject to the relations 
$$x^4 = y^4 = 1,\ \ \ \  x^2 = y^2,\ \ \ \  xy = yx^{-1}.$$
The multiplication table of $\Dc_2$ is given in \cref{multdic2}.

\begin{table}[h!]
\caption{Multiplication in $\Dc_2$}
\begin{center}\label{multdic2}
\begin{TAB}(e,1cm,1cm){c|cccc:cccc}{c|cccc:cccc}
& $1$ & $x$ & $x^2$ & $x^3$ & $y$ & $yx$ & $yx^2$ & $yx^3$\\
$1$ & $1$ & $x$ & $x^2$ & $x^3$ & $y$ & $yx$ & $yx^2$ & $yx^3$\\
$x$ & $x$ & $x^2$ & $x^3$ & $1$ & $yx^3$ & $y$ & $yx$ & $yx^2$\\
$x^2$ & $x^2$ & $x^3$ & $1$ & $x$ & $yx^2$ & $yx^3$ & $y$ & $yx$\\
$x^3$ & $x^3$ & $1$ & $x$ & $x^2$ & $yx$ & $yx^2$ & $yx^3$ & $y$\\
$y$ & $y$ & $yx$ & $yx^2$ & $yx^3$ & $x^2$ & $x^3$ & $1$ & $x$\\
$yx$ & $yx$ & $yx^2$ & $yx^3$ & $y$ & $x$ & $x^2$ & $x^3$ & $1$\\
$yx^2$ & $yx^2$ & $yx^3$ & $y$ & $yx$ & $1$ & $x$ & $x^2$ & $x^3$\\
$yx^3$ & $yx^3$ & $y$ & $yx$ & $yx^2$ & $x^3$ & $1$ & $x$ & $x^2$
\end{TAB}
\end{center}
\end{table}

Since homomorphisms preserve order and $x^2$ is the only element of order 2, every automorphism of $\Dc_2$ must fix both 1 and $x^2$.  Moreover, every automorphism is uniquely determined by its action on the generators $x$ and $y$.  We determine the inner automorphisms by conjugating the generators by each element of $\Dc_2$.

For all $r,s \in \{1, 2, \dots, n\}$, conjugation by $x^r$ and $x^s$ is equivalent. Thus there is exactly one inner automorphism corresponding to conjugation by a power of $x$. We  denote it by $\Inn(x)$. Conjugation by $yx^2$ and conjugation by $y$ are equivalent, as are conjugation by $yx^3$ and $yx$. These last two complete the list of inner automorphisms. We list all inner automorphisms in \cref{inndic2}.

\begin{table}[h!]
\caption{Inner Automorphisms of $\Dc_2$}
\begin{center}
\begin{tabular}{lllc}
Name & $x\mapsto$ & $y\mapsto$ & Order\\
\hline
$\id$ & $x$ & $y$ & 1\\
$\Inn(x)$ & $x$ & $yx^2$ & 2\\
$\Inn(y)$ & $x^3$ & $y$ & 2\\
$\Inn(yx)$ & $x^3$ & $yx^2$ & 2\\
\end{tabular}
\end{center}
\label{inndic2}
\end{table}

Since each non-trivial inner automorphism has order 2, it follows that the inner automorphism group is isomorphic to $V$, the Klein-4 group. Moreover, the outer automorphism group is isomorphic to $S_3$, and the entire automorphism group is isomorphic to $S_4$. These isomorphisms are described below.  The complete listing of automorphisms of $\Dc_2$ is given in \cref{autdic2}.

\begin{table}[h]
\caption{Representative Outer Automorphisms of $\Dc_2$}
\begin{center}
\begin{tabular}{lllc}
Name & $x\mapsto$ & $y\mapsto$ & Order\\
\hline
$\id$  & $x$ & $y$ & 1\\
$\varphi_1$ & $x^3$ & $yx^3$ & 2\\ 
$\varphi_2$ & $yx^2$ & $x^3$ & 2 \\
$\varphi_3$ & $yx^3$ & $yx^2$ & 2\\
$\varphi_4$ & $y$ & $yx$ & 3\\
$\varphi_5$ & $yx$ & $x$ & 3\\
\end{tabular}
\end{center}
\label{outdic2}
\end{table}

Using the representatives in \cref{outdic2}, we exhibit an isomorphism of $\Out(\Dc_2)$ and $S_3$ by the map given in \cref{outdic2s3}.
\begin{table}[h]
\caption{Isomorphism of $\Out(\Dc_2)$ and $S_3$}
\begin{center}
\begin{tabular}{lcccccc}
$\id$ & $\varphi_1$ & $\varphi_2$ & $\varphi_3$ & $\varphi_4$ & $\varphi_5$\\
$\updownarrow$& $\updownarrow$& $\updownarrow$& $\updownarrow$& $\updownarrow$& $\updownarrow$\\
$\id$ & $(12)$ & $(13)$ & $(23)$ & $(123)$ & $(132)$
\end{tabular}
\end{center}
\label{outdic2s3}
\end{table}

By this map, the cosets of $S_4/V$ as elements of $S_3$ are exactly the cosets of the outer automorphism group $\Aut(\Dc_2)/\Inn(\Dc_2)$. 

\section{Symmetric spaces of $G$}
In this section we describe the symmetric space $Q$ and the fixed-point group $H=G^{\varphi}$, for a given automorphism of order $k$. When $\varphi$ has order $2$ it follows that $Q \cong G/H$. Recall that 
$$H = \{g \in G\st \varphi(g) = g\}$$ and 
$$Q = \{x\varphi(x)^{-1} \st x \in G\}.$$

\begin{proposition}\label{q_and_h}
Let $\varphi_{(r,s)}$ be an automorphism of order $k$. Then
$$Q=\{x^{b(1-r)} \text{ and } x^{s+b(r-1)} \st b \in \mathbb{Z}_{2n}\},$$
and
$$H=\{x^{b} \st b(1-r) \equiv 0 \} \cup \{yx^{b} \st b(1-r) \equiv s \}.$$
\end{proposition}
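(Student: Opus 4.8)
The plan is to compute both sets by a direct calculation, splitting into the two element types supplied by \cref{uniqpres}: every $g\in G$ is uniquely $x^b$ or $yx^b$ with $b\in\mathbb{Z}_{2n}$. Since the order-$k$ hypothesis together with \cref{automorphisms} guarantees $\varphi=\varphi_{(r,s)}$ is a genuine automorphism, we have $\varphi(x^b)=x^{rb}$ and $\varphi(yx^b)=yx^{s+rb}$, so everything reduces to manipulating exponents modulo $2n$ using \cref{inverse} and the commutation rule $x^{\ell}y=yx^{-\ell}$ recorded in the proof of \cref{uniqpres}.

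For $Q$, I would first take $g=x^b$: then $g\varphi(g)^{-1}=x^b x^{-rb}=x^{b(1-r)}$, which produces the first family. For $g=yx^b$ I would compute $\varphi(g)^{-1}=(yx^{s+rb})^{-1}=yx^{s+rb+n}$ by \cref{inverse}, then push the leading $x^b$ past the second $y$ via $x^by=yx^{-b}$ and collapse $y^2=x^n$; after reducing the exponent modulo $2n$ this gives $g\varphi(g)^{-1}=x^{s+b(r-1)}$, the second family. As $b$ ranges over $\mathbb{Z}_{2n}$ these two families exhaust $Q$, matching the stated description.

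For $H$, the condition $\varphi(g)=g$ unwinds coordinatewise using uniqueness of the presentation. If $g=x^b$, then $\varphi(g)=x^{rb}=x^b$ precisely when $b(r-1)\equiv 0$, i.e.\ $b(1-r)\equiv 0 \pmod{2n}$. If $g=yx^b$, then $\varphi(g)=yx^{s+rb}=yx^b$ precisely when $s+rb\equiv b$, i.e.\ $b(1-r)\equiv s \pmod{2n}$. Taking the union of these two solution sets yields the claimed form of $H$.

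The arguments are short, and I do not anticipate a real obstacle: the only step needing care is the reordering for $g=yx^b$ in the computation of $Q$, where one must correctly apply both $x^{\ell}y=yx^{-\ell}$ and $y^2=x^n$ and keep track of the fact that exponents live in $\mathbb{Z}_{2n}$.
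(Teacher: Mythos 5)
Your proposal is correct and follows essentially the same route as the paper: a direct case split into $x^b$ and $yx^b$ using the unique presentation, computing $g\varphi(g)^{-1}$ with the inverse formula $(yx^c)^{-1}=yx^{c+n}$ and the relations $x^\ell y = yx^{-\ell}$, $y^2=x^n$ for $Q$, and solving $\varphi(g)=g$ coordinatewise for $H$. The exponent bookkeeping you flag as the only delicate step is exactly the computation the paper carries out, so no changes are needed.
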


\begin{proof}
The elements of $G$ also in $Q$ are those such that:
\begin{align}
\label{Q1}x^b\varphi(x^b)^{-1} &= x^b(x^{rb})^{-1} = x^{b(1-r)}\\
\label{Q2}yx^b\varphi(yx^b)^{-1} &= yx^b(yx^{s + rb})^{-1} = yx^byx^{s+rb+n} = y^2x^{s-b+rb+n} = x^{s+b(r-1)}
\end{align}
Similarly, the elements of $H$ are those such that:
\begin{align}
\label{H1}\varphi(x^b) &= x^{rb} = x^b,\\
\label{H2}\varphi(yx^b) &= yx^{s+rb} = yx^b,
\end{align}
It follows that $x^b \in H$ if $b(1 - r) \equiv 0$ and $yx^b \in H$ if $s + rb \equiv b$.
\end{proof}

\begin{example}
Consider $\varphi_{(5,2)}$ with $n=3$. By \cref{q_and_h}, $Q_{(5,2)}=\{1, x^2, x^4\}$ and $H_{(5,2)}=\{1,x^3,yx, yx^4\}$.
\end{example}

The descriptions of $Q$ and $H$ are more specific when $\varphi$ is an inner automorphism. Using \cref{innfixC}, if $\varphi$ is inner, $s$ is even and $r = 1$ or $2n-1$.

\begin{corollary}\label{innerqh}
Let $\varphi$ be a non-trivial inner automorphism. Then
\begin{enumerate}[$(1)$]
\item $Q=\{1,x^s\}$ and $H=\{1,x,x^2, \ldots, x^{2n-1}\}$ when $r=1$, and
\item $Q=\{1,x^2,\ldots, x^{2n-2} \}$ and $H=\{1,x^n, yx^{\frac{s}{2}} , yx^{{\frac{s}{2}}+n} \}$ when $r=2n-1$.
\end{enumerate}
\end{corollary}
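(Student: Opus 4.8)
The plan is to substitute the two admissible shapes of an inner automorphism, as identified in \cref{innfixC} — namely $r=1$ with $s$ even, and $r=2n-1$ with $s$ even — directly into the descriptions of $Q$ and $H$ from \cref{q_and_h}, and then simplify the resulting exponents modulo $2n$.

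First I would treat the case $r=1$. Here the exponent $b(1-r)$ collapses to $0$, so the first family in $Q$ contributes only the identity, while the second family $x^{s+b(r-1)}$ collapses to $x^{s}$; hence $Q=\{1,x^{s}\}$. For $H$, the condition $b(1-r)\equiv 0$ holds for every $b$, giving all of $C_{2n}$; and the condition $b(1-r)\equiv s$ becomes $0\equiv s$, which fails precisely because $\varphi$ is non-trivial (so $s\not\equiv 0$), so no element $yx^{b}$ lies in $H$. This yields part $(1)$.

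Next I would treat the case $r=2n-1\equiv -1 \pmod{2n}$, using the reductions $b(1-r)=b(2-2n)\equiv 2b$ and $b(r-1)=b(2n-2)\equiv -2b$ modulo $2n$. The first family in $Q$ then becomes $\{x^{2b}\st b\in\mathbb{Z}_{2n}\}=\{1,x^{2},\dots,x^{2n-2}\}$, and since $s$ is even the second family $\{x^{s-2b}\}$ runs through exactly the same set of even powers, so $Q=\{1,x^{2},\dots,x^{2n-2}\}$. For $H$, the condition $2b\equiv 0\pmod{2n}$ forces $b\equiv 0\pmod n$, i.e.\ $b\in\{0,n\}$, giving $\{1,x^{n}\}$; and writing $s=2\cdot\frac{s}{2}$, the condition $2b\equiv s\pmod{2n}$ forces $b\equiv \frac{s}{2}\pmod n$, i.e.\ $b\in\{\frac{s}{2},\frac{s}{2}+n\}$, giving $\{yx^{s/2},yx^{s/2+n}\}$. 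Combining these yields part $(2)$.

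The computation is largely routine substitution; the only points requiring care are the reduction of $b(2\mp 2n)$ modulo $2n$ and the bookkeeping that, because $s$ is even, the congruence $2b\equiv s\pmod{2n}$ is solvable and has exactly the two solutions $\frac{s}{2}$ and $\frac{s}{2}+n$ — this is where the hypothesis that $\varphi$ is inner, equivalently that $s$ is even, is genuinely used. I would also be careful to invoke non-triviality in the $r=1$ case to discard the $yx^{b}$ part of $H$.
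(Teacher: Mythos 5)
Your proposal is correct and follows essentially the same route as the paper: substitute the inner-automorphism constraints from \cref{innfixC} into the formulas of \cref{q_and_h} and reduce the exponents modulo $2n$, using non-triviality to rule out $s\equiv 0$ when $r=1$. If anything, you are slightly more careful than the paper in noting that $2b\equiv s\pmod{2n}$ has \emph{exactly} the two solutions $\tfrac{s}{2}$ and $\tfrac{s}{2}+n$, where the paper only checks that these two values work.
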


\begin{proof}
Begin with $r = 1$.  By \cref{q_and_h}, $Q = \{1, x^s\}$. Since $b(1-r) = (0) \equiv 0$ for every $b \in \mathbb{Z}_{2n}$, it follows that $H \supseteq C_{2n}$. By \cref{H2}, $b \equiv s + b$ implies that $s \equiv 0$, and $s \neq 0$ since $\varphi \neq \id$. So $H = C_{2n}$.

Now let $r = 2n - 1$. Again, by \cref{q_and_h}, $Q = \{x^{2b}, x^{s-2b}\}$, but since $s$ is even, we get only the even powers of $x$.  For $H$, equation \cref{H1} implies that $2b \equiv 0$, which has solutions $b = 0$ and $n$. Thus, $1$ and $x^n$ are in $H$. \Cref{H2} implies that $b \equiv s - b$ or $2b \equiv s$.  Since $\frac{s}{2}$ solves $2b \equiv s$, and $2(\frac{s}{2}+n) = s + 2n$ implies that $s + 2n \equiv s$, $yx^{s/2}$ and $yx^{s/2+n}$ are in $H$.  
\end{proof}

\begin{remark}
If $r=1$ and $s=n$ is even, then $\varphi$ is an involution.  Also, if $r=2n-1$, $\varphi$ is an involution for any choice of $s \in \mathbb{Z}_{2n}$.
\end{remark}

\begin{remark}
	There exist non-isomorphic automorphisms with identical fixed point groups. For example, let $n=4$ and consider $\varphi_{(3,0)}$ and $\varphi_{(7,0)}$. By \cref{q_and_h}, $H_{(3,0)}=H_{(7,0)}=\{x^b \st b(1 - 3) \equiv 0 \} \cup \{yx^b \st b \equiv 0 + 3b\}$. That is, $2b \equiv 0$, and hence $b = 4$. Thus $H = \{1,x^{4}, y, yx^{4}\}$. Since $3 \neq 7$, by \cref{isomorphy} $\varphi_{(3,0)} \not \sim \varphi_{(7,0)}$.
\end{remark}

\subsection{Elements split by an automorphism}

We say that an element $g \in G$ is \emph{split} by an automorphism if $\varphi(g) = g^{-1}$. We denote by $R$ the set of all elements split by $\varphi$, i.e.,
$$R = R_{\varphi} = \{g \in G \ | \ \varphi(g) = g^{-1}\}.$$
 
\begin{proposition}
\begin{displaymath}
R_{\varphi_{(r,s)}}=\{x^b \st b(r+1) \equiv 0\} \cup \{yx^b \st s+rb \equiv n+b\}.
\end{displaymath}
\end{proposition}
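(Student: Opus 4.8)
The plan is to mimic the proof of \cref{q_and_h}: I would decompose $G$ into the two families furnished by \cref{uniqpres}, namely the elements $x^b$ and the elements $yx^b$ with $b \in \bbz_{2n}$, and then impose the defining condition $\varphi_{(r,s)}(g) = g^{-1}$ on each family in turn.

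First I would handle $g = x^b$. By \cref{automorphisms}, $\varphi_{(r,s)}(x^b) = x^{rb}$, while \cref{inverse} gives $(x^b)^{-1} = x^{2n-b}$. By the uniqueness of the normal form in \cref{uniqpres}, the equation $x^{rb} = x^{2n-b}$ holds if and only if $rb \equiv -b$, equivalently $b(r+1) \equiv 0$; this produces the first set in the union.

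Next I would treat $g = yx^b$. Here $\varphi_{(r,s)}(yx^b) = \varphi_{(r,s)}(y)\,\varphi_{(r,s)}(x)^b = yx^s x^{rb} = yx^{s+rb}$, and \cref{inverse} gives $(yx^b)^{-1} = yx^{b+n}$. Again by uniqueness of the normal form, $yx^{s+rb} = yx^{b+n}$ if and only if $s + rb \equiv b + n$, which is exactly the second set in the union. Since no element of the form $x^b$ coincides with one of the form $yx^{b'}$, the set $R_{\varphi_{(r,s)}}$ is precisely the disjoint union of these two families, as claimed.

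I expect no real obstacle here; the argument is essentially bookkeeping. The only points requiring a little care are to appeal consistently to the uniqueness of the presentation in \cref{uniqpres} when converting an equality of group elements into a congruence on exponents modulo $2n$, and to observe that the two families partition $G$, so that the displayed union is automatically disjoint.
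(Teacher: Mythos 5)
Your proposal is correct and follows essentially the same route as the paper's own proof: split $G$ into the families $x^b$ and $yx^b$, apply $\varphi_{(r,s)}$ and the inverse formulas from \cref{inverse}, and convert the resulting equalities into the two congruences. The only difference is that you make the appeals to \cref{uniqpres} and \cref{inverse} explicit, which the paper leaves implicit.
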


\begin{proof}
We examine the action of $\varphi$ on the two types of elements in $G$:
$$\varphi(x^b) = x^{rb} = x^{-b} \Leftrightarrow rb \equiv -b \Leftrightarrow b(r+1) \equiv 0,$$
$$\varphi(yx^b) = yx^{s+rb} = yx^{b+n} \Leftrightarrow s + rb \equiv b + n.$$
\end{proof}

\begin{corollary}
\label{RminusQ}
\begin{displaymath}
R - Q = \{yx^l \st s + rl \equiv n + l\} \cup \{x^k \st k(r + 1) \equiv 0, k \notin\langle 1-r \rangle, k \notin s + \langle r - 1\rangle \}
\end{displaymath}
\end{corollary}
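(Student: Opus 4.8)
The plan is to compute the set difference $R - Q$ directly from the two explicit descriptions already available: the description of $R = R_{\varphi_{(r,s)}}$ in the preceding proposition, and the description of $Q$ in \cref{q_and_h}. The key structural observation is that every element of $Q$ is a power of $x$: \cref{q_and_h} exhibits $Q$ as $\{x^{b(1-r)} \st b \in \mathbb{Z}_{2n}\} \cup \{x^{s+b(r-1)} \st b \in \mathbb{Z}_{2n}\}$, so $Q \subseteq C_{2n}$, and by the uniqueness of presentation (\cref{uniqpres}) no element of the form $yx^c$ lies in $Q$.

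With that in hand I would split $R$ according to the two types of elements in its description. For the $yx$-type elements: $R$ contains every $yx^l$ with $s + rl \equiv n + l$, and none of these lies in $Q$ by the observation above, so all of them survive into $R - Q$; this produces the first summand $\{yx^l \st s + rl \equiv n + l\}$ unchanged. For the $x$-type elements: $x^k \in R$ exactly when $k(r+1) \equiv 0$, and such an $x^k$ lies in $Q$ precisely when $k = b(1-r)$ for some $b$ — that is, $k \in \langle 1-r\rangle$ — or $k = s + b(r-1)$ for some $b$ — that is, $k \in s + \langle r-1\rangle$. Subtracting these from the $x$-type part of $R$ gives the second summand, and taking the union of the two parts yields the stated formula.

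The computation is essentially bookkeeping, so I do not expect a genuine obstacle; the only point requiring care is the reading of the notation $\langle 1-r\rangle$ and $\langle r-1\rangle$ as the subsets $\{b(1-r)\st b\in\mathbb{Z}_{2n}\}$ and $\{b(r-1)\st b\in\mathbb{Z}_{2n}\}$ of $\mathbb{Z}_{2n}$ (which coincide, since $r-1 = -(1-r)$), so that the conjunction $k \notin \langle 1-r\rangle$ and $k \notin s + \langle r-1\rangle$ encodes exactly the condition $x^k \notin Q$. The one thing that must be checked rather than assumed is that no $yx$-type element of $R$ can coincide with an element of $Q$, and this is precisely what the containment $Q \subseteq C_{2n}$ guarantees.
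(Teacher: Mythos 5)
Your proposal is correct and is exactly the intended argument: the paper states this as an immediate corollary of the preceding proposition together with \cref{q_and_h} and gives no separate proof, and your bookkeeping (in particular the observation that $Q \subseteq C_{2n}$ so the $yx^l$ part of $R$ survives intact, while the $x^k$ part is pruned by the two cosets $\langle 1-r\rangle$ and $s+\langle r-1\rangle$) is precisely that omitted computation.
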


\begin{example}
\label{varphi1n}
Consider $\varphi_{(1,n)}$. Since $\varphi|_{C_{2n}} = \id$, the only elements split by $\varphi$ are those of order two, namely 1 and $x^n$.  Also, $\varphi(yx^b) = yx^{n+b} = (yx^b)^{-1}$, so $yx^{b}\in R$ for all $b$.  Thus, $R = \{1, x^n, y, yx, ..., yx^{2n-1}\}$.  From \cref{q_and_h}, $Q = \{1, x^n\}$, so $R - Q = \{y, yx, ..., yx^{2n-1}\}$.
\end{example}

\subsection{The action of $G$ on $Q$}

The group $G$ acts on $Q$ by $\varphi$-twisted conjugation, which we denote by $*$.  For $\varphi \in \Aut(G)$, we define the action of $G$ on $Q$ by
$$g * q = g q \varphi(g)^{-1},$$
where $g\in G$ and $q \in Q$.  Given $q = h\varphi(h)^{-1} \in Q$, this becomes $$g \ast q = g \ast h \varphi(h)^{-1} \mapsto gh\varphi(h)^{-1}\varphi(h)^{-1} = gh\varphi(gh)^{-1}.$$ We denote the orbits in $Q$ with respect to $\varphi$-twisted conjugation ($*$) by $G\backslash Q$.

\begin{proposition}
\label{prop:G-orbits}
Let $\varphi \in \text{Aut}(G)$.
\begin{enumerate}[$(1)$]
\item If $s\notin \langle 1-r\rangle$, then the $H$-orbits in $Q$ are:
\begin{displaymath}
H\backslash Q = \big\{  \{x^{j} \} \st j\in \langle 1-r\rangle \cup (s+ \langle r-1\rangle)\big\} = \big\{ \{ x^{j} \} \st x^{j}\in Q \big\}.
\end{displaymath}
\item
If $s\in \langle 1-r\rangle$, then the $H$-orbits in Q are:
\begin{displaymath}
H\backslash Q = \big\{  \{x^{j},x^{-j} \} \st j\in \langle 1-r\rangle \cup (s+ \langle r-1\rangle)\big\} = \big\{ \{ x^{j},x^{-j} \} \st x^{j}\in Q \big\}.
\end{displaymath} 
\end{enumerate}
In each case, there is a single $G$-orbit in $Q$, so $G\backslash Q = \{ Q\}$.
\end{proposition}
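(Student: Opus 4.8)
The plan is to dispose of the $G$-orbit statement in one line and to get the $H$-orbits by reducing the $\varphi$-twisted action to ordinary conjugation. Throughout I would lean on \cref{q_and_h}, which lets us write $Q=\{x^{j}\st j\in\langle 1-r\rangle\cup(s+\langle r-1\rangle)\}$ and $H=\{x^{b}\st b(1-r)\equiv 0\}\cup\{yx^{b}\st b(1-r)\equiv s\}$; in particular the two descriptions of $H\backslash Q$ in each case of the statement coincide, since $\langle 1-r\rangle=\langle r-1\rangle$ as subgroups of $\bbz_{2n}$ and $Q$ consists exactly of the $x^{j}$ with $j$ in that index set.

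For the $H$-orbits, the first observation is that every $h\in H$ satisfies $\varphi(h)=h$, so $h\ast q=hq\varphi(h)^{-1}=hqh^{-1}$; thus $H$ acts on $Q$ by ordinary conjugation. Next I would conjugate a typical element $q=x^{j}\in Q$ by the two kinds of group elements. Conjugation by $h=x^{b}\in H$ fixes $x^{j}$ since $C_{2n}$ is abelian, so the whole subgroup $H\cap C_{2n}$ acts trivially on $Q$. Conjugation by $h=yx^{b}\in H$, on the other hand, sends $x^{j}$ to $x^{-j}$: writing $(yx^{b})^{-1}=yx^{b+n}$ via \cref{inverse} and using $x^{l}y=yx^{-l}$, one computes $(yx^{b})(x^{j})(yx^{b+n})=y^{2}x^{-j+n}=x^{n}x^{n-j}=x^{-j}$. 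Consequently the $H$-orbit of $x^{j}$ is $\{x^{j}\}$ when $H$ contains no element of the form $yx^{b}$, and $\{x^{j},x^{-j}\}$ when it does.

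It then remains to identify the dichotomy. By the formula for $H$, an element $yx^{b}$ lies in $H$ precisely when $b(1-r)\equiv s\pmod{2n}$ has a solution $b$, i.e. precisely when $s\in\langle 1-r\rangle$; this is exactly the split between case (2) ($s\in\langle 1-r\rangle$) and case (1) ($s\notin\langle 1-r\rangle$). In case (2) I would also check that the orbits $\{x^{j},x^{-j}\}$ actually lie in $Q$: when $s\in\langle 1-r\rangle$ we have $Q=\{x^{j}\st j\in\langle 1-r\rangle\}$, and $\langle 1-r\rangle$ is a subgroup of $\bbz_{2n}$, hence closed under negation. Finally, for the $G$-orbits: given $q=h\varphi(h)^{-1}\in Q$ and any $g\in G$, $g\ast q=gh\varphi(h)^{-1}\varphi(g)^{-1}=(gh)\varphi(gh)^{-1}$ ranges over all of $Q$ as $g$ varies, so $G\ast q=Q$ and $G\backslash Q=\{Q\}$.

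I do not expect a serious obstacle here; the only thing to be careful about is bookkeeping — checking the conjugation identity $(yx^{b})(x^{j})(yx^{b})^{-1}=x^{-j}$ precisely, matching solvability of $b(1-r)\equiv s$ exactly to $s\in\langle 1-r\rangle$, and observing that the set $\{x^{j},x^{-j}\}$ degenerates harmlessly to a singleton when $j\equiv 0$ or $j\equiv n\pmod{2n}$.
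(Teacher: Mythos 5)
Your proposal is correct and follows essentially the same route as the paper's proof: reduce the $\varphi$-twisted $H$-action to ordinary conjugation, note that $H\cap C_{2n}$ acts trivially while any $yx^{b}\in H$ sends $x^{j}$ to $x^{-j}$, tie the existence of such elements to the solvability of $b(1-r)\equiv s$ (i.e.\ to $s\in\langle 1-r\rangle$), and observe that the $G$-orbit of any point is all of $Q$ since $g\ast h\varphi(h)^{-1}=(gh)\varphi(gh)^{-1}$. Your extra checks (closure of $\langle 1-r\rangle$ under negation and the degenerate singleton when $j\equiv 0$ or $n$) are sound refinements the paper leaves implicit.
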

 \begin{proof}
Let $\varphi \in \text{Aut(G)}$. Because $H$ is the set of fixed points of $\varphi$, the action of $H$ on $Q$ is simply conjugation. If $s\notin \langle 1-r\rangle$, $H\subset \langle x\rangle$ by \cref{q_and_h}. So for $h_{1}=x^{i} \in H, q=x^{j} \in Q$, $h_{1} \ast q = x^{i} x^{j} x^{-i}=x^{j}=q$, and thus $H$ fixes $Q$ pointwise. If $s\in \langle 1-r\rangle$, then $\{yx^{b} : b\equiv s+rb\} \subset H$ so we must consider the action of these elements on $Q$ as well. Fixing $h_{2}=yx^{k} \in H$, we see that $h_{2} \ast q = yx^{k}x^{j}yx^{k+n} = y^{2}x^{-k-j}x^{k+n}=x^{n}x^{-k-j}x^{k+n}=x^{-j}$. Then $h_{2}$ sends $x^{-j}$ to $x^{j}$, so the orbit of $x^{j}$ under $H$ is $\{x^{j},x^{-j}\}$.

Now we show that there is exactly one $G$-orbit in $Q$. Every element of $Q$ is of the form $q_1=x^{b(1-r)}$ or $q_2=x^{s+b(r-1)}$. Notice $x^{-b}\in G$ with $x^{-b}\varphi(x^{-b})^{-1} = x^{-b}(x^{-rb})^{-1} = x^{-b}x^{2n-rb}=x^{b(1-r)}=q_{1}$, and $yx^{b}\varphi(yx^{b})^{-1} = yx^{b}(yx^{s+rb})^{-1} = yx^{b}yx^{n+s+rb}=y^{2}x^{n+s+b(r-1)}=x^{s+b(r-1)}=q_{2}$. Therefore $G\backslash Q = \{ Q\}$.
\end{proof}

In the following section we provide a summary of results for the special case when $n=2$.

\subsection{Symmetric Spaces of $\Dc_2$}

We list the fixed-point groups $H$ and symmetric spaces $Q$ for all automorphisms in \cref{dic2appendix}. For each involution, we also compute the set $R$ of $\varphi$-split elements. Finally, we determine the $H$-orbits in $Q$ corresponding to each automorphism. 

For the identity automorphism, $H = G$ implies that the $H$-orbits and $G$-orbits are identical, and $Q = \{1\}$ implies they are both $\{1\}$. 

For the remaining inner automorphisms, $Q = Z(G) = \{1, x^2\}$, so $g * q = gq\varphi(g)^{-1} = qg\varphi(g)^{-1}$. In particular, for $h \in H$, $h * q = qh\varphi(h)^{-1} = qhh^{-1} = q$, so the $H$-orbits are $\{q\}$ for all $q \in Q$. In fact, we get the same result if $H_{\varphi} = Z(G)$, since $h * q = hq\varphi(h)^{-1} = hqh^{-1} = qhh^{-1} = q$.  So if $Q_{\varphi} = Z(G)$ or $H_{\varphi} = Z(G)$, the $H$-orbits in $Q_{\varphi}$ are of the form $\{q\}$ for all $q \in Q$. 

By the above analysis, we have determined the $H$-orbits for every automorphism except the last six listed. In what follows, we use the fact that orbits partition a set. For each of these automorphisms, $Q = \{1, q\}$ for some nontrivial $q \in G$.  Since $h * 1 = h1\varphi(h)^{-1} = hh^{-1} = 1$, $O_1 = \{1\}$.  Thus $O_q = \{1\}$ or $O_q = \{q\}$, and hence $O_q = \{q\}$. Thus for all $\varphi \in \Aut(\Dc_2)$, the $H$-orbits are of the form $O_q = \{q\}$ for some $q \in Q$.

Every automorphism fixes $1$ and $x^2$, so $\{1, x^2\} \subseteq H_{\varphi}$ for all $\varphi \in \Aut(G)$.  Since $H$ is non-empty, $1 = h\varphi(h)^{-1} = hh^{-1} \in Q_{\varphi}$ for all $h \in H, \varphi \in \Aut(G)$.  But then calculating the orbit of $1$ gives $O_1 = \{a \ast 1 = a1\varphi(a)^{-1} = a\varphi(a)^{-1} | \hspace{2pt} a \in G\} = Q_{\varphi}$.  Since $q \in O_q$ for all $q \in Q$ and $O_1 = Q$ implies that $O_q = Q$ for all $q \in Q$.  This analysis holds for every automorphism, so there is a single $G$-orbit for every automorphism. In summary, we have the following proposition.

\begin{proposition}
Let $G = \Dc_2$, $\varphi \in \Aut(G)$, and $H$ and $Q$ be the fixed point group and symmetric space corresponding to $\varphi$. Under $\varphi$-twisted conjugation, $G$ acts transitively on $Q$, and the $H$ orbits in $Q$ are always single elements.
\end{proposition}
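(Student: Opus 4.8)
The plan is to handle the two assertions separately: transitivity of the $G$-action is essentially a formal fact, whereas triviality of the $H$-orbits requires a short case check over the automorphisms of $\Dc_2$ catalogued in Section~2. For the $G$-action I would first note that $1\in Q$ for every $\varphi$, since for any $h\in H$ (for example $h=1$) one has $h\varphi(h)^{-1}=1$; hence the $*$-orbit of $1$ is $G*1=\{g\cdot 1\cdot\varphi(g)^{-1}:g\in G\}=\{g\varphi(g)^{-1}:g\in G\}=Q$, so $Q$ is a single $G$-orbit. This step uses nothing special about $\Dc_2$.

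For the $H$-orbits the key observation is that, since $\varphi$ fixes $H$ pointwise, the $*$-action of $H$ on $Q$ is ordinary conjugation, $h*q=hqh^{-1}$, so the $H$-orbit $\{hqh^{-1}:h\in H\}$ of $q\in Q$ is a singleton precisely when $q$ commutes with all of $H$. I would then run through the cases coming from the description of $\Aut(\Dc_2)$. If $\varphi=\id$ then $Q=\{1\}$ and there is nothing to prove. If $\varphi$ is a nontrivial inner automorphism, then $Q=Z(G)=\{1,x^{2}\}$ by \cref{innerqh}, and central elements commute with everything; more generally, whenever $Q\subseteq Z(G)$ or $H\subseteq Z(G)$ the conclusion is immediate. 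Finally, consulting the tabulated data for $\Dc_2$, the only automorphisms not covered by one of these conditions are the six of order $4$, and each of these has $|Q|=2$, say $Q=\{1,q\}$; since $h*1=hh^{-1}=1$ forces $O_{1}=\{1\}$ and the $H$-orbits partition $Q$, we must have $O_{q}=\{q\}$, which finishes the proof.

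I expect the main obstacle to be the bookkeeping in the last step: one must verify that the trichotomy ``$Q\subseteq Z(G)$, or $H\subseteq Z(G)$, or $|Q|=2$'' genuinely accounts for all $24$ automorphisms of $\Dc_2$, which is exactly where the explicit computations of $H$ and $Q$ from \cref{q_and_h}, \cref{innerqh} and the appendix table are needed. There is no single uniform reason why all $H$-orbits are trivial---$Q\subseteq Z(G)$ fails for many $\varphi$, for instance---so the case analysis cannot be avoided entirely, but it stays short if one groups the automorphisms as identity, nontrivial inner, outer with $H\subseteq Z(G)$, and the six automorphisms of order $4$.
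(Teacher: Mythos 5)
Your proposal is correct and follows essentially the same route as the paper: transitivity via $O_1=G*1=Q$, and for the $H$-orbits the same case split into the identity, the automorphisms with $Q\subseteq Z(G)$ or $H\subseteq Z(G)$, and the six order-$4$ automorphisms handled by the partition argument $O_1=\{1\}$, $Q=\{1,q\}$, hence $O_q=\{q\}$. No substantive differences to report.
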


\bibliographystyle{plain}
\bibliography{gssdihedral}

\appendix
\section{$\Dc_2$}\label{dic2appendix}

\begin{table}[h]
\caption{Symmetric Spaces and Fixed-Point Groups for $\Dc_2$}
\begin{center}
\begin{tabular}{ccccc}\label{autdic2}
Automorphism & Order & $H$ & $Q$ & $R$\\
\hline
$\id$ & 1 & $\Dc_2$ & $\{1\}$ & \\
$\Inn(x)$ & 2 & $\{1, x, x^2, x^3\}$ & $\{1, x^2\}$ & $\{1, x^2, y, yx, yx^2, yx^3\}$\\
$\Inn(y)$ & 2 &$\{1, x^2, y, yx^2\}$ & $\{1, x^2\}$ & $\{1, x, x^2, x^3, yx, yx^3\}$\\
$\Inn(yx)$ & 2 & $\{1, x^2, yx, yx^3\}$ & $\{1, x^2\}$ & $\{1, x, x^2, x^3, y, yx^2\}$\\
$\varphi_1$ & 2 & $\{1, x^2\}$ & $\{1, x, x^2, x^3\}$ & $\{1, x, x^2, x^3\}$\\
$\varphi_2$ & 2 &$\{1, x^2\}$ & $\{1, x^2, yx, yx^3\}$ & $\{1, x^2, yx, yx^3\}$\\
$\varphi_3$ & 2 & $\{1, x^2\}$ & $\{1, x^2, y, yx^2\}$ & $\{1, x^2, y, yx^2\}$\\
$\varphi_1 \circ \Inn(x)$ & 2 & $\{1, x^2\}$ & $\{1, x, x^2, x^3\}$ & $\{1, x, x^2, x^3\}$\\
$\varphi_2 \circ \Inn(yx)$ & 2 & $\{1, x^2\}$ & $\{1, x^2, yx, yx^3\}$ & $\{1, x^2, yx, yx^3\}$\\
$\varphi_3 \circ \Inn(y)$ & 2 & $\{1, x^2\}$ & $\{1, x^2, y, yx^2\}$ & $\{1, x^2, y, yx^2\}$\\
$\varphi_4$ & 3 &$\{1, x^2\}$ & $\{1, x, y, yx\}$\\
$\varphi_5$ & 3 &$\{1, x^2\}$ & $\{1, x^3, yx^2, yx^3\}$\\
$\varphi_4 \circ \Inn(x)$ & 3 & $\{1, x^2\}$ & $\{1, x^3, yx, yx^2\}$\\
$\varphi_4 \circ \Inn(y)$ & 3 & $\{1, x^2\}$ & $\{1, x, yx^2, yx^3\}$\\
$\varphi_4 \circ \Inn(yx)$ & 3 & $\{1, x^2\}$ & $\{1, x^3, y, yx^3\}$\\
$\varphi_5 \circ \Inn(x)$ & 3 & $\{1, x^2\}$ & $\{1, x, yx, yx^2\}$\\
$\varphi_5 \circ \Inn(y)$ & 3 & $\{1, x^2\}$ & $\{1, x, y, yx^3\}$\\
$\varphi_5 \circ \Inn(yx)$ & 3 & $\{1, x^2\}$ & $\{1, x^3, y, yx\}$\\
$\varphi_1 \circ \Inn(y)$ & 4 & $\{1, x, x^2, x^3\}$ & $\{1, x^3\}$\\
$\varphi_1 \circ \Inn(yx)$ & 4 & $\{1, x, x^2, x^3\}$ & $\{1, x\}$\\
$\varphi_2 \circ \Inn(x)$ & 4 & $\{1, x^2, yx, yx^3\}$ & $\{1, yx^3\}$\\
$\varphi_2 \circ \Inn(y)$ & 4 & $\{1, x^2, yx, yx^3\}$ & $\{1, yx\}$\\
$\varphi_3 \circ \Inn(x)$ & 4 & $\{1, x^2, y, yx^2\}$ & $\{1, y\}$\\
$\varphi_3 \circ \Inn(yx)$ & 4 & $\{1, x^2, y, yx^2\}$ & $\{1, yx^2\}$\\
\end{tabular}
\end{center}
\end{table}

\end{document}